\documentclass{amsart}
\usepackage{enumerate}

\newtheorem{theorem}{Theorem}[section]
\newtheorem{lemma}[theorem]{Lemma}
\newtheorem{corollary}[theorem]{Corollary}
\theoremstyle{definition}

\theoremstyle{remark}
\newtheorem{remark}[theorem]{Remark}

\numberwithin{equation}{section}

\begin{document}

\title[Hessian Bounds for Heat Equations on K\"ahler Manifolds]
{Upper Bounds for Hessian Matrices of Positive Solutions to Heat Equations on K\"ahler Manifolds}

\author{Zhao Wenshuo}
\address{School of Mathematics and Statistics, Wuhan University, Wuhan, Hubei Province, China}

\email{wenshuo@whu.edu.cn}
\thanks{The first author was supported in part by NSF Grant \#000000.}

\author{Zhu Anqiang}
\address{School of Mathematics and Statistics, Wuhan University, Wuhan, Hubei Province, China}
\email{aqzhu.math@whu.edu.cn}
\thanks{Support information for the second author.}

\subjclass[2020]{Primary 54C40, 14E20; Secondary 46E25, 20C20}

\date{}

\keywords{Heat equation; Hessian estimate; logarithmic Hessian bound; K\"ahler manifold}

\begin{abstract}
We prove global and local upper bounds for the Hessian matrices of positive solutions to the heat equation on K\"ahler manifolds whose bisectional curvature is bounded from below. We also improve a result of Han and Zhang by weakening the curvature assumptions in their Hessian estimates on Riemannian manifolds. More precisely, we extend their global and local upper bounds, originally obtained under two-sided curvature bounds, to Riemannian manifolds with sectional curvature bounded from below.
\end{abstract}

\maketitle

\section{Introduction}
\label{}

In this paper, we study upper bounds for the Hessian matrices of positive solutions to the heat equation on Riemannian and K\"ahler manifolds. We establish both global and local forms of these estimates.

In the seminal paper \cite{Li-Yau}, Peter Li and S.-T. Yau established the fundamental gradient estimate, now known as the Li-Yau estimate, for positive solutions to the heat equation on Riemannian manifolds with Ricci curvature bounded from below. They also showed that the classical Harnack inequality follows from this gradient estimate by integration along minimizing geodesics. Later, in \cite{Hamilton}, Richard Hamilton extended the Li-Yau estimate to a matrix Hessian estimate under the stronger assumptions that the sectional curvature is bounded from below and that the covariant derivative of the Ricci curvature satisfies $|\nabla \operatorname{Ric}|<C$. In \cite{Cao-Ni}, Huai-Dong Cao and Lei Ni showed that, on K\"ahler manifolds, the assumption on the covariant derivative of the Ricci curvature can be removed, and that the sectional curvature assumption in Hamilton's result can be replaced by a lower bound on the holomorphic bisectional curvature. Many related Harnack estimates for positive solutions to the heat equation under geometric flows have also been established; see, for example, \cite{CaoXD}, \cite{Cao-Hamilton}, and \cite{Li-Zhang}.

The estimates discussed above are global in nature and mainly concern lower bounds for the Laplacian or Hessian of the logarithm of a positive solution to the heat equation. Upper bounds for the Hessian have also been studied. In \cite{Lijiayu}, Jiayu Li obtained estimates for the second derivatives of the heat kernel. In \cite{Han-Zhang}, Qing Han and Qi S. Zhang derived upper bounds for the Hessian matrices of logarithmic heat solutions on manifolds. They proved both global and local versions of these bounds, which are sharp in the corresponding settings, for fixed metrics and under the Ricci flow. In the fixed metric case, their results require a bound on the covariant derivative of the Ricci curvature and an $L^{\infty}$ bound on the curvature operator.

Motivated by Hamilton's matrix estimate, it is natural to expect global upper bounds for the Hessian matrices of positive heat solutions under the weaker assumptions that the sectional curvature is bounded from below and $|\nabla \operatorname{Ric}|<C$. We show that the method of Han and Zhang \cite{Han-Zhang} can be refined to yield both global and local upper bounds under these weaker curvature assumptions. Our first main result is the following.
\begin{theorem}\label{Riemannian}
Let $(M,g)$ be a Riemannian manifold with sectional curvature bounded from below, $\operatorname{sec}\geq -K_{1}$, where $K_{1}\geq 0$. Suppose that the covariant derivative of the Ricci curvature is bounded, $|\nabla \operatorname{Ric}|\leq K_{2}$. 
 \begin{enumerate}[(a)]
 \item Let $u(x,t)$ be a bounded positive solution of the heat equation, $0<u\leq A$,
\begin{align}\label{Heat equation}
\partial_{t}u-\Delta u=0, \quad (x,t)\in M\times [0,T).
\end{align}
Then 
\begin{align*}
t(u_{ij})\leq u(C+Bt)(1+\log \frac{A}{u}), \quad (x,t)\in M\times [0,T),
\end{align*}
where $C$ is a universal constant and $B$ is a nonnegative constant depending on $K_{1}$ and $K_{2}$.

 \item Suppose that $u(x,t)$ is a solution of 
\begin{align*}
\partial_{t}u-\Delta u=0, \quad (x,t)\in Q_{R,T}(x_{0},t_{0}).
\end{align*}
Then 
\begin{align*}
(u_{ij})\leq Cu\left(\frac{1}{T}+\frac{1}{R^{2}}+B\right)\left(1+\log \frac{A}{u}\right)^{2}, \quad (x,t)\in Q_{R,T}(x_{0},t_{0}),
\end{align*}
where $C$ is a universal constant and $B$ is a nonnegative constant depending only on the lower bound for the Riemannian curvature tensor and the bound for the covariant derivative of the Ricci curvature in $B\left(x_0, R\right)$.
  \end{enumerate}
\end{theorem}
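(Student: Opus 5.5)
We follow Han--Zhang's maximum principle argument for the matrix quantity, and refine the treatment of curvature terms so that only $\sec\ge -K_1$ and $|\nabla\operatorname{Ric}|\le K_2$ are used.

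\textbf{Setup for (a).} After scaling we may take $A=1$, so $0<u\le 1$. Set $f=\log(1/u)\ge 0$, so $1+\log\frac{A}{u}=1+f$. The key computation is the evolution of the Hessian under the heat equation on a fixed metric:
\[
(\partial_t-\Delta)u_{ij}=2R_{ikjl}u_{kl}-R_{ik}u_{kj}-R_{jk}u_{ki}+(\nabla_i R_{jk}+\nabla_j R_{ik}-\nabla_k R_{ij})u_k .
\]
We then form the symmetric tensor
\[
S_{ij}=t\,u_{ij}-u\,(C+Bt)(1+f)\,g_{ij}
\]
and aim to show $S\le 0$ by Hamilton's tensor maximum principle. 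At $t=0$ we have $S<0$.

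\textbf{Handling the curvature terms.} We argue by contradiction. Suppose there is a first time and a point with a unit null eigenvector $v$ of $S$, so $S\le 0$ and $S(v,v)=0$. Choose a frame diagonalizing $u_{ij}$, with $v=e_1$ and $\lambda_1=\max$ eigenvalue. In this frame
\[
2R_{1k1l}u_{kl}-2R_{1k}u_{k1}=2\sum_k R_{1k1k}(\lambda_k-\lambda_1).
\]
Since $\lambda_k-\lambda_1\le 0$ and $R_{1k1k}\ge -K_1$, this term is bounded above by
\[
2K_1\sum_k(\lambda_1-\lambda_k)=2K_1\bigl(n\lambda_1-\Delta u\bigr).
\]
Only the lower sectional bound is used here; Han--Zhang instead bounded $|Rm|$. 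The leftover $-\Delta u=-u_t$ is not signed, so the term $2K_1(n\lambda_1-\Delta u)$ must be absorbed. For $n\lambda_1$ we use the contradiction hypothesis $t\lambda_1=u(C+Bt)(1+f)$. For $-\Delta u$ we use $-\Delta u = u(|\nabla f|^2-\Delta f)$ together with the known global bounds:
\begin{itemize}
\item Hamilton's gradient estimate $t|\nabla u|^2\le u^2\log(A/u)$;
\item the Li--Yau lower bound $-\Delta f\ge -\frac{n}{2t}-cK_1$, i.e.\ $\Delta u\ge -u(\frac{n}{2t}+cK_1)$, valid since $\operatorname{Ric}\ge-(n-1)K_1$.
\end{itemize}
The gradient term $K_2|\nabla u|$ is handled with Hamilton's estimate, which gives $|\nabla u|\le u\sqrt{f/t}$, and then Cauchy--Schwarz into $u(1+f)(1+Bt)/t$-type quantities.

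\textbf{Evolution of the barrier and main obstacle.} Next we compute $(\partial_t-\Delta)$ of $\phi=u(C+Bt)(1+f)$. Since $(\partial_t-\Delta)(u(1+f))=(\partial_t-\Delta)(u-u\log u)=|\nabla u|^2/u$, we get
\[
(\partial_t-\Delta)\phi = B\,u(1+f)+(C+Bt)\,u|\nabla f|^2 .
\]
Together with $\partial_t(tu_{ij})=u_{ij}+t(\ldots)$, the inequality $0\le(\partial_t-\Delta)S_{11}$ at the touching point produces $\lambda_1$ terms, which we replace via $t\lambda_1=\phi$. We then have to beat these with $B u(1+f)$ and the $|\nabla f|^2$ term, choosing $C$ large (universal) and $B\sim K_1+K_2+K_2^{2/3}$-type. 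The delicate part is the mismatch between the $u\,\frac{1+f}{t}$ term coming from $u_{ij}$ at $t$ small and the $C$ that only multiplies $\phi$. We expect this to be the main obstacle: Han--Zhang resolve it using the extra gradient term, and we would follow that exactly. If a genuine noncompactness issue arises, we run the argument on $S-\epsilon e^{t}\rho$ with an exhaustion function $\rho$, using the lower curvature bound for Laplacian comparison of the distance function.

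\textbf{Local version (b).} We use the same quantity multiplied by a space-time cutoff $\psi$ supported in $Q_{R,T}$, with $|\nabla\psi|^2/\psi\le C/R^2$, $-\Delta\psi\le C(1/R^2+\sqrt{K_1}/R)$ (by Laplacian comparison from $\operatorname{Ric}\ge-(n-1)K_1$), and $|\psi_t|\le C/T$. We consider $\psi\,u_{11}/(u(1+f)^2)$ at a maximum point and replace the global Hamilton and Li--Yau inputs by their local versions (Souplet--Zhang $|\nabla f|\le C(1/R+1/\sqrt T+\sqrt{K_1})(1+f)$ and local Li--Yau). The squared factor $(1+f)^2$ arises exactly from squaring the local gradient bound. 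Cutoff cross terms $\nabla\psi\cdot\nabla u_{11}$ are absorbed with the standard trick that $\nabla$ of the quantity vanishes at the maximum. This yields the stated bound with $B$ depending on $K_1,K_2$ in $B(x_0,R)$.
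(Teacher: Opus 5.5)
\textbf{There is a genuine gap: the quantity you apply the maximum principle to cannot close.}

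Your curvature step is the same as the paper's key new idea. At the top eigenvector you write the Riemann--Ricci part as $2\sum_k(\lambda_1-\lambda_k)R(e_k,e_1,e_1,e_k)$ and use $\sec\ge -K_1$ with $\lambda_1\ge\lambda_k$. You then control the trace by Li--Yau and the $\nabla\operatorname{Ric}$ term by Hamilton's gradient estimate.

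The problem is $S_{ij}=tu_{ij}-\phi g_{ij}$ itself. It is linear in the Hessian, and $u_{ij}$ solves a linear equation. So nothing coercive absorbs the term $u_{11}=\phi/t$ produced when $\partial_t$ hits the factor $t$. Already in $\mathbb{R}^n$ ($K_1=K_2=0$), with your $f=\log(A/u)$, at a first touching point
\[
(\partial_t-\Delta)S_{11}=\frac{\phi}{t}-(\partial_t-\Delta)\phi=\frac{Cu(1+f)}{t}-(C+Bt)\,u|\nabla f|^2 .
\]
This equals $Cu(1+f)/t>0$ if the touching happens at a critical point of $u$, which nothing rules out. For $B=0$ it is $\ge Cu/t>0$ everywhere, by Hamilton's bound $|\nabla f|^2\le f/t$.

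So the null-eigenvector condition fails, and no choice of $C$ and $B$ repairs it. Deferring to ``the extra gradient term'' does not help inside your setup: the only gradient term available, $(C+Bt)u|\nabla f|^2$, is already in the display above.

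Your local quantity $\psi u_{11}/(u(1+f)^2)$ has the same defect. Even after using the first-order condition at the maximum, its evolution is affine in $u_{11}$, with no superlinear good term. The cutoff errors are linear in the quantity with coefficients of order $R^{-2}+T^{-1}+\sqrt{K_1}/R$ and $\sqrt{\psi}|\nabla f|/R$, and nothing absorbs them.

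\textbf{The missing idea is a Bernstein-type coupling with the gradient, built into the quantity.} This is Han--Zhang's device, which the paper keeps.

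Set $f=\log(u/A)\le 0$ (the paper's sign), $V=u_{ij}/(u(1-f))$, $W=u_iu_j/(u^2(1-f)^2)$ and $L=-\partial_t+\Delta-\frac{2f}{1-f}\nabla f\cdot\nabla$. Then
\[
LV=(1-f)wV+P,\qquad LW=2(1-f)wW+2(V+fW)^2+Q .
\]
The Bochner term $2(V+fW)^2$ is the source of coercivity. At a unit top eigenvector $\xi$ of $\alpha V+W$ with eigenvalue $\lambda\ge 0$ and $\alpha\ge 2$, it gives $H\ge 2\lambda^2/\alpha^2$.

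Apply the maximum principle to the top eigenvalue of $\alpha V+W-\frac{\tau}{t}g$. This gives $0\ge \frac{2\lambda^2}{\alpha^2}-\frac{\tau}{t^2}+\xi^T(\alpha P+Q)\xi$. The quadratic term absorbs $\tau/t^2$ (for $\tau$ large) and all curvature terms linear in $\lambda$.

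Your curvature estimate must then be run in the eigenframe of $\alpha V+W$, not of $u_{ij}$. Two things change:
\begin{enumerate}[(i)]
\item The trace is now $\alpha\Delta u/(u(1-f))+|\nabla u|^2/(u^2(1-f)^2)$.
\item Because $Q$ contains only Ricci terms, there is an extra term $2R_{kijl}w_{kl}\xi_i\xi_j$, which needs $\sec\ge -K_1$ separately.
\end{enumerate}

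For (b), the paper maximizes the top eigenvalue of $\psi(\alpha V+W)+\beta f g$ with $\beta\sim R^{-2}$. The $\beta f$ part satisfies $\psi L_1 f\ge \frac14\psi|\nabla f|^2-C|\nabla\psi|^2/\psi$ for the cutoff-modified operator $L_1$, and this absorbs the cutoff cross term. The term $\frac{2}{\alpha^2}(\psi\lambda)^2$ absorbs the remaining errors. The two factors of $1+\log\frac{A}{u}$ come from the normalization of $V$ and from $\beta|f|$, not from squaring the local gradient bound.
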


\begin{remark}
Note that if $K_{1}=K_{2}=0$, then $B=0$. Thus, on a sphere with constant sectional curvature, we have 
\begin{align*}
tu_{ij}\leq c(n)u(1+\log \frac{u}{A})\leq c(n)A.
\end{align*}
As $t\rightarrow \infty$, it follows that $u_{ij}\leq \frac{c}{t}$.
\end{remark}

On Riemannian manifolds, the estimates above still require a bound on $|\nabla \operatorname{Ric}|$. The condition $\nabla \operatorname{Ric}=0$ would be too restrictive, since it imposes a strong parallel Ricci condition on the manifold. Motivated by \cite{Cao-Ni}, we next consider local and global Hessian estimates for positive solutions to the heat equation on K\"ahler manifolds. We show that, in the K\"ahler setting, the bound on the covariant derivative of the Ricci curvature can be removed, and that the $L^{\infty}$ bound on the Riemannian curvature can be replaced by a lower bound on the holomorphic bisectional curvature. Our second main result is the following.
\begin{theorem}\label{Kahler}
    Let $M$ be a complex manifold equipped with a K\"ahler metric $\omega$. Assume that the bisectional curvature of $(M,\omega)$ is bounded from below by $-K$, that is, $\operatorname{bisec} \geq -K$.
    \begin{enumerate}[(a)]
        \item Suppose that $u$ is a solution of
    \[
    \partial_t u-\Delta u=0 \quad \text{in } M \times(0, T] .
    \]
    
    Assume $0<u \leq A$. Then
    \[
    t\left(u_{i \bar{j}}\right) \leq u(C+K t)\left(1+\log \frac{A}{u}\right) \quad \text{in } M \times(0, T],
    \]
    where $C$ is a universal constant.
    \item Suppose that $u$ is a solution of
    \[
    \partial_t u-\Delta u=0 \quad \text{in } Q_{R, T}\left(x_0, t_0\right) .
    \]
    
    Assume $0<u \leq A$. Then
    \[
    \left(u_{i \bar{j}}\right) \leq C u\left(\frac{1}{T}+\frac{1}{R^2}+K\right)\left(1+\log \frac{A}{u}\right)^2 \quad \text{in } Q_{\frac{R}{2}, \frac{T}{2}}\left(x_0, t_0\right) .
    \]
    where $C$ is a universal constant.
    \end{enumerate}
\end{theorem}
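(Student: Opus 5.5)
We follow the Han--Zhang strategy, replacing their Riemannian commutator computation with the Kähler one of Cao--Ni. Write $f=\log(A/u)\ge 0$, so $u=Ae^{-f}$. Set $v_{i\bar j}=u_{i\bar j}/u$ and consider the Hermitian matrix
\[
H_{i\bar j}=t\,\frac{u_{i\bar j}}{u\,(1+f)}, \qquad \text{or equivalently study } \Phi=\frac{t\,u_{i\bar j}}{u(1+f)}-(C+Kt)\,g_{i\bar j}.
\]
The aim is to show that its largest eigenvalue cannot become positive. The first step is the evolution equation for $u_{i\bar j}$. On a Kähler manifold, differentiating $\partial_t u=\Delta u$ (with $\Delta=g^{k\bar l}\nabla_k\nabla_{\bar l}$) and commuting covariant derivatives gives
\[
(\partial_t-\Delta)u_{i\bar j}=R_{i\bar j k\bar l}\,u_{l\bar k}-\tfrac12\big(R_{i\bar p}u_{p\bar j}+R_{p\bar j}u_{i\bar p}\big).
\]
Crucially, no $\nabla\operatorname{Ric}$ term appears, because in the Kähler case the third-order commutators produce only $\nabla_i\nabla_{\bar j}$ contracted against curvature; the would-be $\nabla \operatorname{Ric}\cdot\nabla u$ terms cancel by the Kähler identities, as in Cao--Ni. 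I then evaluate this at a null eigenvector $\xi$ of the top eigenvalue of $u_{i\bar j}$. The combination $R_{\xi\bar\xi k\bar l}u_{l\bar k}-R_{\xi\bar\xi}u_{\xi\bar\xi}$ is bounded below by $-K$ times a positive multiple of $u_{\xi\bar\xi}$, up to trace terms. This follows by diagonalizing $u_{i\bar j}$ and using $\operatorname{bisec}\ge -K$ on each pair $(\xi,e_k)$, since $\sum_k R_{\xi\bar\xi k\bar k}(\lambda_k-\lambda_\xi)\le K\sum_k(\lambda_\xi-\lambda_k)$ whenever $\lambda_\xi$ is maximal. This yields a differential inequality of the form
\[
(\partial_t-\Delta)\lambda_{\max}(u_{i\bar j})\le C(n)K\,(\lambda_{\max}+|\Delta u|)
\]
in the barrier sense.

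Next I combine this with the scalar quantities. Exactly as in Han--Zhang, use $(\partial_t-\Delta)u=0$. Use also $(\partial_t-\Delta)(u f)=-u|\nabla f|^2$, since $\log(A/u)$ satisfies $(\partial_t-\Delta) f=-|\nabla f|^2$. Finally use the Hamilton-type gradient bound $t|\nabla u|^2\le C u^2(1+f)$, which holds under a Ricci lower bound (a consequence of $\operatorname{bisec}\ge-K$). Then apply the maximum principle to
\[
Q=\frac{t\,\lambda_{\max}(u_{i\bar j})}{u(1+f)}-(C+Kt).
\]
At a first positive maximum, $\nabla Q=0$ lets one replace the cross terms $\nabla\lambda\cdot\nabla u/u$ by $Q$-dependent quantities. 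The good term $u|\nabla f|^2/(1+f)^2$ arising from $(1+f)^{-1}$ dominates the gradient cross terms, and the reaction term $C(n)K\lambda_{\max}$ is absorbed by the $-Kt$ correction. The term $\lambda_{\max}/t$ from $\partial_t t$ is absorbed by choosing the universal $C$ large. On a noncompact $M$ I first need a maximum principle for bounded functions. I would justify it either by a cutoff argument or by noting that the local estimate in (b) gives a priori boundedness of $tu_{i\bar j}/u$ on compact time intervals.

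For (b), I multiply by a Li--Yau cutoff $\psi(x,t)$ supported in $Q_{R,T}$, with $|\nabla\psi|^2/\psi\le C/R^2$ and $|\partial_t\psi|\le C/T$. The Laplacian comparison $\Delta r\le C(\sqrt K+1/r)$ follows from the Ricci lower bound implied by bisec. I run the same maximum principle argument on $\psi\,\lambda_{\max}(u_{i\bar j})/(u(1+f)^2)$. The square power is needed, as in Han--Zhang, so that the cutoff-gradient term $|\nabla\psi|\,|\nabla f|$ is controlled by the good term $|\nabla f|^2/(1+f)$ via Cauchy--Schwarz, producing the factor $(1/T+1/R^2+K)(1+f)^2$. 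The main obstacle is the algebra at the maximum point. I must check that the leftover gradient terms from $\nabla(u(1+f)^k)$ have the right sign when paired with the eigenvalue quantity. I also have to handle the fact that $\lambda_{\max}$ is only Lipschitz, which I would do using a parallel-transported fixed unit vector $\xi$ near the point, in Hamilton's way. The other key point is confirming that the curvature reaction term requires only the bisectional lower bound, not the two-sided bound. I would attempt this first by diagonalizing at the point as above.
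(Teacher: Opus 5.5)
There is a genuine gap, in the step you treat as routine: ``the term $\lambda_{\max}/t$ from $\partial_t t$ is absorbed by choosing the universal $C$ large.'' Your test quantity $Q=t\lambda_{\max}(V)-(C+Kt)$, with $V=u_{i\bar j}/h$ and $h=u(1+\log\frac{A}{u})$, has no term superlinear in $\lambda_{\max}$.

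One computes $\nabla h=\log\frac{A}{u}\,\nabla u$ and $(\partial_t-\Delta)h=u|\nabla f|^2$ (up to the normalization of $\Delta$). So, apart from curvature terms and the transport term $2h^{-1}\nabla h\cdot\nabla V$, the only favorable contribution to $(\partial_t-\Delta)V_{\xi\bar\xi}$ is $-V_{\xi\bar\xi}\,|\nabla f|^2/(1+f)$. At a first time where $Q$ reaches $0$ you need $(\partial_t-\Delta)Q<0$. But $\partial_t-\Delta$ annihilates the constant $C$, while $\partial_t t$ contributes $+\lambda_{\max}(V)=(C+Kt)/t$, which grows with $C$.

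The favorable term and the transport term both vanish where $\nabla u=0$, and the maximum can sit there. Take $K=0$ and $u=A-\varepsilon\,p(\cdot,t+1)$ with $p$ the Euclidean heat kernel. The normalized Hessian is largest at the origin, a critical point of $u$, and there $(\partial_t-\Delta)(tV_{\xi\bar\xi})=V_{\xi\bar\xi}>0$, so the maximum principle gives nothing. Hamilton's gradient bound cannot rescue this, since it only bounds $|\nabla f|$ from above.

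Structurally, $u_{i\bar j}$ solves a linear equation, so the hypothesis $0<u\le A$ has to enter through a quantity whose evolution produces a negative square of the Hessian. The same defect breaks your part (b). The cutoff errors $\psi\lambda(1/T+1/R^2+\sqrt K/R)$ and $R^{-1}\psi\lambda\sqrt\psi|\nabla f|$ are linear in $\lambda$. Normalizing by $u(1+f)^2$ instead of $u(1+f)$ again only yields a $|\nabla f|^2$-weighted good term.

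The missing idea is the Han--Zhang device the paper uses. Add the rank-one gradient tensor $W=\big(u_iu_{\bar j}/(u^2(1+\log\frac{A}{u})^2)\big)$ and run the maximum principle on the top eigenvalue $\lambda$ of $\alpha V+W-\frac{\tau}{t}g$, with $\alpha\ge 2$. The evolution of $W$ contains $|(V-\log\frac{A}{u}\,W)\xi|^2$. At an eigenvector $\xi$ of $\alpha V+W$ with $\lambda\ge 0$, using $\bar\xi^TW\xi\le\operatorname{tr}W$, this gives $L\lambda\ge\lambda^2/\alpha^2+(\text{curvature terms})$ for a drift heat operator $L$.

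Only because of this quadratic term does enlarging a constant help. Combine it with your diagonalization (which is exactly the paper's) and the one-sided Li--Yau bound $-\Delta u/u\le C/t+CK$ for the trace term; you need only this sign, not a bound on $|\Delta u|$. One then gets $\lambda\le C\alpha\sqrt{\tau+1}/t+C\alpha K$, and $C\alpha\sqrt{\tau+1}<\tau$ for $\tau$ large, which yields $\lambda-\tau/t\le C\alpha K$.

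In (b) the same quadratic term absorbs the cutoff errors via Cauchy--Schwarz. The leftover $R^{-2}\psi|\nabla\log u|^2$ is absorbed by adding $-\beta\log\frac{A}{u}\,g$ with $\beta\sim R^{-2}$, whose evolution supplies $\frac{\beta}{8}\psi|\nabla\log u|^2$. The square $(1+\log\frac{A}{u})^2$ then arises as one factor from the normalization of $V$ times one factor from $\beta\log\frac{A}{u}\le CR^{-2}(1+\log\frac{A}{u})$.
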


When the K\"ahler manifold has nonnegative bisectional curvature, the estimates reduce to the following simpler form.

\begin{corollary}
    Let $(M,\omega)$ be a K\"ahler manifold with nonnegative bisectional curvature, and let $u$ be a positive bounded solution of the heat equation with $0<u\leq A$. Then
\begin{align*}
\left(u_{i \bar{j}}\right)\leq CA\frac{1}{t},
\end{align*}
where $C$ is a constant.
\end{corollary}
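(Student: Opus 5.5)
This follows directly from part (a) of Theorem \ref{Kahler} with $K=0$. Nonnegative bisectional curvature means $\operatorname{bisec}\geq 0 = -K$, so the global estimate applies with $K=0$ and reads
\begin{align*}
t\left(u_{i\bar{j}}\right)\leq C\,u\left(1+\log\frac{A}{u}\right)\quad\text{in } M\times(0,T].
\end{align*}
Everything then reduces to a one-variable bound on the right-hand side.

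The key step is the elementary inequality
\begin{align*}
\sup_{0<s\leq A} s\left(1+\log\frac{A}{s}\right)=A .
\end{align*}
To prove it, set $f(s)=s(1+\log A-\log s)$. Then $f'(s)=\log A-\log s\geq 0$ on $(0,A]$, so $f$ is nondecreasing there and its maximum is $f(A)=A$. Also $f(s)\to 0$ as $s\to 0^{+}$, so the supremum is exactly $A$.

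Applying this with $s=u(x,t)\in(0,A]$ gives $t(u_{i\bar j})\leq CA$ as Hermitian matrices. Dividing by $t>0$ yields $(u_{i\bar j})\leq CA\,t^{-1}$, with the same universal constant $C$ as in Theorem \ref{Kahler}(a).

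There is no real obstacle. The only points to check are that the hypotheses of Theorem \ref{Kahler}(a) hold, namely that $u$ is a positive bounded solution on $M\times(0,T]$ with $T$ arbitrary, and that the matrix inequality survives multiplication by the positive scalar $1/t$.
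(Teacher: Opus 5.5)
Your argument is correct and is exactly what the paper intends: the corollary is just the $K=0$ case of Theorem~\ref{Kahler}(a), combined with the elementary bound $u\left(1+\log\frac{A}{u}\right)\leq A$ for $0<u\leq A$, which the paper also uses implicitly in the remark after Theorem~\ref{Riemannian}. Your monotonicity check of $s\mapsto s\left(1+\log\frac{A}{s}\right)$ supplies the one detail the paper leaves unstated.
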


The paper is organized as follows. In Section~2, we introduce the notation used throughout the paper. In Section~3, we prove the K\"ahler case, Theorem~\ref{Kahler}. In Section~4, we prove the Riemannian case, Theorem~\ref{Riemannian}.

\section{Notation} 
Throughout the paper, $(M, \omega)$ denotes a K\"ahler manifold. In local coordinates $(z^1, \ldots, z^n)$, the Christoffel symbols are denoted by $\Gamma_{ij}^{k}=g^{k\bar{p}}\partial_i{g_{j\bar{p}}}$. On a K\"ahler manifold, the curvature tensor is defined by $R_{i \bar{j}k}^l =-\partial_{\bar{j}} \Gamma_{i k}^l$, and the Ricci curvature is $R_{i \bar{j}}=R_{i \bar{j} k \bar{l}} g^{k \bar{l}}$. The Hessian of a function $u$ is written as $u_{i \bar{j}}$. If $V$ is a $(1,1)$-Hermitian symmetric tensor field on $M$ and $\xi$ is a vector field of type $(1,0)$, then in local coordinates we use $\bar{\xi}^T V \xi$ to denote $V(\xi, \bar{\xi})$.
On a Riemannian manifold $(M,g)$, we use the convention $R_{ij}=g^{kl}R_{iklj}$ in local coordinates, and the Hessian of a function $u$ is written as $u_{ij}$.
The distance function is denoted by $d(x,y)$. A geodesic ball is denoted by $B(x, r)$, where $x$ is a point in $M$ and $r$ is the radius. For $R>0$ and $T>0$, we define the parabolic cube by
\[
Q_{R, T}\left(x_0, t_0\right)=B\left(x_0, R\right) \times\left(t_0-T, t_0\right] .
\]
Positive constants are denoted by $C$, with or without indices, and may change from line to line.

\section{Heat Equations on K\"ahler Manifolds}

Let $M$ be a K\"ahler manifold with metric $g$ and $\Delta=\frac{1}{2}(\nabla_{i}\nabla_{\bar{i}}+\nabla_{\bar{i}}\nabla_{i})$. We consider a positive solution $u$ of the heat equation
\[
u_t=\Delta u \quad \text { in } M \times(0, \infty) .
\]
We assume
\[
0<u \leq A .
\]
Set
\[
f=\log \frac{u}{A} .
\]
Let $\left\{z^1, \ldots, z^n\right\}$ be a local holomorphic normal frame at a point, say $p \in M$. Then
\[
f_i=\frac{u_i}{u}, \quad f_{i \bar{j}}=\frac{u_{i \bar{j}}}{u}-\frac{u_i u_{\bar{j}}}{u^2},
\]
and hence
\[
f_t=\Delta f+\frac{1}{2}|\nabla f|^2 .
\]

The following lemma is a key point of the paper. In \cite{Han-Zhang}, the Ricci flow is used to cancel the gradient Ricci term. In the K\"ahler case, these terms vanish because of the symmetries of the curvature tensor on a K\"ahler manifold.

\begin{lemma}\label{lemma:2.1}(cf. Lemma 2.1 in \cite{Cao-Ni})
    Let $u$ satisfy the heat equation $(\partial_t-\Delta) u=0$. Then the Hessian of $u$ satisfies
        \[
        \left(\frac{\partial}{\partial t}-\Delta\right) u_{i \bar{j}}=R_{i \bar{j} l \bar{k}} u_{k \bar{l}} - \frac{1}{2}
          \left(
            R_{i \bar{m}} u_{m {\bar{j}}} 
            + R_{m\bar{j}} u_{i \bar{m}}
          \right) .
        \]
\end{lemma}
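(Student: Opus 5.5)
The plan is a direct tensor computation in a holomorphic normal frame at a point $p$. We commute the time derivative with $\nabla_i\nabla_{\bar j}$ and then commute covariant derivatives to move $\Delta$ past $\nabla_i\nabla_{\bar j}$. Since the metric is fixed, $\partial_t u_{i\bar j}=\nabla_i\nabla_{\bar j}(\partial_t u)=\nabla_i\nabla_{\bar j}\Delta u$. The whole task is therefore to compute the commutator $\nabla_i\nabla_{\bar j}\Delta u-\Delta u_{i\bar j}$. With the normalization $\Delta=\frac12(\nabla_k\nabla_{\bar k}+\nabla_{\bar k}\nabla_k)$ and the fact that on functions $\nabla_k\nabla_{\bar k}u=\nabla_{\bar k}\nabla_k u$, we may write $\Delta u=u_{k\bar k}$ at $p$. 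For the tensor $u_{i\bar j}$ we must instead keep the symmetrized form $\Delta u_{i\bar j}=\frac12(u_{i\bar j k\bar k}+u_{i\bar j\bar k k})$.

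The key steps, in order, are as follows.

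First, write $\nabla_i\nabla_{\bar j}\Delta u=u_{k\bar k\,i\bar j}$ at the level of fourth derivatives. On a Kähler manifold, the Christoffel symbols are pure type, so $u_{k\bar k i}=u_{k i\bar k}+(\text{curvature})\cdot u$. Hence $u_{k\bar k i}=u_{ik\bar k}$ up to a term $R\cdot\nabla u$. More precisely, $u_{\bar k k i}=u_{\bar k i k}=u_{i\bar k k}$ holds because mixed $(1,0)$ derivatives of a $(0,1)$-form commute, while $u_{k\bar k i}$ differs by a Ricci term $R_{i\bar m}u_m$-type contribution.

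Second, differentiate once more in $\bar j$ and commute $\bar j$ past the remaining indices. Commuting $\nabla_{\bar j}$ with $\nabla_k$ acting on the $(1,1)$-tensor $u_{i\bar l}$ produces two curvature terms, one for each index. These are exactly $R_{i\bar j l\bar k}u_{k\bar l}$-type terms and Ricci terms $R_{i\bar m}u_{m\bar j}$ and $R_{m\bar j}u_{i\bar m}$.

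Third, collect everything. Terms involving $\nabla R\ast\nabla u$ appear when the Ricci term from the first step is differentiated. These must cancel, and this is where the Kähler symmetries enter. By the second Bianchi identity in Kähler form, $\nabla_{\bar j}R_{i\bar k}=\nabla_{\bar k}R_{i\bar j}$, together with pairing the two halves of the symmetrized Laplacian, the $\nabla\mathrm{Ric}$ contributions from $u_{i\bar jk\bar k}$ and $u_{i\bar j\bar k k}$ come with opposite signs and cancel. This is the point emphasized before the lemma.

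I expect the main obstacle to be bookkeeping. We must track signs and which half of the symmetrized Laplacian produces which Ricci term, so that exactly the coefficient $-\frac12$ appears on $R_{i\bar m}u_{m\bar j}+R_{m\bar j}u_{i\bar m}$. We must also verify that the full-curvature term appears with coefficient $1$ and the $\nabla\mathrm{Ric}$ terms cancel completely. I would organize it by computing $u_{i\bar j k\bar k}-u_{k\bar k i\bar j}$ and $u_{i\bar j\bar k k}-u_{k\bar k i\bar j}$ separately via the Ricci identities $[\nabla_k,\nabla_{\bar l}]$ on $(1,0)$ and $(0,1)$ indices, then average them. As a check, the result should agree with Cao--Ni's Lemma 2.1 after adjusting for their normalization of $\Delta$.
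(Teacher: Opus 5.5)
Your route is sound, but it is not the paper's. The paper never commutes covariant derivatives. It works with ordinary partial derivatives in holomorphic normal coordinates at $p$. Because the K\"ahler connection has pure type, $u_{i\bar j}=\partial_i\partial_{\bar j}u$ in every holomorphic chart. So $\partial_t u_{i\bar j}=\partial_i\partial_{\bar j}(g^{k\bar l}\partial_k\partial_{\bar l}u)$ and $\Delta u_{i\bar j}$ both reduce at $p$ to the same fourth partial derivative of $u$. Curvature enters only through $\partial_i\partial_{\bar j}g^{k\bar l}=R_{i\bar j l\bar k}$ in the first expression, and through $\partial_{\bar l}\Gamma^m_{ik}$ (and its conjugate) in the two halves of $\Delta$. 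Since only second derivatives of the metric ever occur, no $\nabla\operatorname{Ric}$ term can arise, and no Bianchi identity is needed. Your Ricci-identity computation is coordinate-free and shows which K\"ahler features are used (type-$(1,1)$ curvature, torsion-freeness). It is also the computation that, in the Riemannian case, leaves the $\nabla\operatorname{Ric}$ terms in $P$. The cost is heavier bookkeeping.

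One point of your sketch is misdescribed. $\nabla_{\bar k}\nabla_k u_{i\bar j}$ and $\nabla_k\nabla_{\bar k}u_{i\bar j}$ differ by a commutator acting on $u_{i\bar j}$, which is purely $\operatorname{Ric}\ast\nabla^2u$. So no $\nabla\operatorname{Ric}$ terms cancel \emph{between} the two halves of the Laplacian.

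Suppose you commute at third order, $\nabla_i\Delta u=\nabla_{\bar k}\nabla_k\nabla_i u-R_{i\bar m}u_m$, and then differentiate in $\bar j$. The resulting $(\nabla_{\bar j}R_{i\bar m})u_m$ is cancelled within the same comparison with $\nabla_{\bar k}\nabla_k u_{i\bar j}$. The cancelling term comes from commuting $\nabla_{\bar j}$ past $\nabla_k$ on $\nabla u$ and then differentiating in $\bar k$. That cancellation uses the traced uncontracted Bianchi identity $g^{k\bar p}\nabla_{\bar p}R_{k\bar j i\bar m}=\nabla_{\bar j}R_{i\bar m}$, not the contraction $\nabla_{\bar j}R_{i\bar k}=\nabla_{\bar k}R_{i\bar j}$ you quote.

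Cleaner still is to use the exact symmetry you already noticed, $\nabla_i u_{k\bar l}=\nabla_k u_{i\bar l}$. It gives $\nabla_{\bar j}\nabla_i\Delta u=g^{k\bar l}\nabla_{\bar j}\nabla_k u_{i\bar l}$. A single commutation on this $(1,1)$-tensor, plus $\nabla_{\bar j}u_{i\bar l}=\nabla_{\bar l}u_{i\bar j}$, yields $\nabla_k\nabla_{\bar k}u_{i\bar j}+R_{i\bar j l\bar k}u_{k\bar l}-R_{m\bar j}u_{i\bar m}$. Writing $\nabla_k\nabla_{\bar k}=\Delta+\frac12[\nabla_k,\nabla_{\bar k}]$ turns $-R_{m\bar j}u_{i\bar m}$ into $-\frac12(R_{i\bar m}u_{m\bar j}+R_{m\bar j}u_{i\bar m})$. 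On this path no $\nabla\operatorname{Ric}$ term ever appears.
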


\begin{proof}
    In a holomorphic normal coordinate system,
\begin{align}\label{partial t Delta u}
        \left(\frac{\partial}{\partial t} u_{i \bar{j}}\right)=(\Delta u)_{i \bar{j}}
          &=\left(g^{k \bar{l}} u_{k \bar{l}}\right)_{i \bar{j}}
          =g^{k \bar{l}} u_{k \bar{l} i \bar{j}}+R_{i \bar{j} l \bar{k}} u_{k \bar{l}}.
\end{align}

Since 
\begin{align*}
           \nabla_{\bar{l}} \nabla_k u_{i \bar{j}}&=\nabla_{\bar{l}}\left(\partial_k u_{i \bar{j}}-\Gamma_{i k}^m u_{m \bar{j}}\right) \nonumber\\
          & =u_{i \bar{j} k \bar{l}}+R_{i \bar{l} k \bar{m}} u_{m{\bar{j}}},
 \end{align*}
 we get
\begin{align}\label{Delta u 1}
        \Delta u_{i \bar{j}}=\frac{1}{2}\left(\nabla_{\bar{m}} \nabla_m u_{i \bar{j}}+\nabla_m \nabla_{\bar{m}} u_{i \bar{j}}\right) =u_{i \bar{j} m \bar{m}}+\frac{1}{2}\left(R_{i \bar{l}} u_{l\bar{j}}+R_{l\bar{j}} u_{i \bar{l}}\right).
\end{align}
Combining (\ref{partial t Delta u}) and (\ref{Delta u 1}), we obtain 
\begin{align*}
 \left(\frac{\partial}{\partial t}-\Delta\right) u_{i \bar{j}}=R_{i \bar{j} l \bar{k}} u_{k \bar{l}} - \frac{1}{2}
          \left(
            R_{i \bar{m}} u_{m {\bar{j}}} 
            + R_{m\bar{j}} u_{i \bar{m}}
          \right) .
\end{align*}
\end{proof}

\begin{lemma}\label{lemma:2.2}
Set
\[v_{i \bar{j}}=\frac{u_{i \bar{j}}}{u(1-f)}, \quad v_{i j}=\frac{u_{i j}}{u(1-f)},
\]
Then
\[
  \begin{aligned}
    \left[-\partial_t  + \Delta-\frac{1}{u(1-f)} \nabla f \cdot \nabla\right] v_{i \bar{j}} =&\frac{1}{2} \frac{|\nabla f|^2}{1-f} v_{i \bar{j}} +\frac{1}{u(1-f)} \\
    & +\left(-R_{i \bar{j} l \bar{k}} u_{k \bar{l}}+\frac{1}{2} R_{i \bar{l}} u_{l\bar{j}}+\frac{1}{2} R_{l \bar{j}} u_{i \bar{l}}\right).
  \end{aligned}
\]
\end{lemma}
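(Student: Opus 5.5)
The plan is a direct quotient-rule computation. Write $v_{i\bar j}=U_{i\bar j}/w$ with $U_{i\bar j}=u_{i\bar j}$ and $w=u(1-f)$, where $f=\log\frac{u}{A}\le 0$, so that $w\ge u>0$. I would compute $(\partial_t-\Delta)w$ and $\nabla w$ explicitly, take $(\partial_t-\Delta)U_{i\bar j}$ from Lemma~\ref{lemma:2.1}, and combine the two through the product rule for the operator $\Delta-\partial_t$ applied to a quotient. Everything is tensorial, so I would work at a point $p$ in a holomorphic normal frame, where Christoffel symbols vanish and $\nabla$ acts on the scalar $w$ simply by differentiation.

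\emph{Step 1: the scalar $w$.} Since $f_t=u_t/u$, we get $w_t=u_t(1-f)-u f_t=-f\,u_t$. For the gradient, $\nabla w=(1-f)\nabla u-u\nabla f=-f\nabla u=-fu\nabla f$. Differentiating once more gives $\Delta w=-f\Delta u-\nabla f\cdot\nabla u=-f\Delta u-u|\nabla f|^2$. The paper uses $\Delta=\frac12(\nabla_i\nabla_{\bar i}+\nabla_{\bar i}\nabla_i)$, so the last term should be read with the same normalization that produced $f_t=\Delta f+\frac12|\nabla f|^2$. Combining these with $u_t=\Delta u$ gives $(\partial_t-\Delta)w=c\,u|\nabla f|^2$, where $c$ is the normalization constant ($c=\tfrac12$ in the paper's convention). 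This is a key structural fact: $w$ is a supersolution with an explicit positive defect.

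\emph{Step 2: the quotient.} For a tensor $U$ and a positive scalar $w$, set $v=U/w$. Then
\[
\Delta v=\frac{\Delta U}{w}-\frac{2c'\,\nabla w\cdot\nabla v}{w}-\frac{v\,\Delta w}{w},\qquad \partial_t v=\frac{U_t}{w}-\frac{v\,w_t}{w},
\]
where $c'$ is again fixed by the normalization of $\Delta$. Hence
\[
(\Delta-\partial_t)v+\frac{2c'\nabla w\cdot\nabla v}{w}=\frac{(\Delta-\partial_t)U}{w}+\frac{v\,(\partial_t-\Delta)w}{w}.
\]
Now substitute three things:
\begin{itemize}
\item $\nabla w=-fu\nabla f$, which turns the drift term into a multiple of $\nabla f\cdot\nabla v$ with a coefficient involving $f$, $u$ and $1-f$;
\item the result of Step~1, which gives $\frac{v\,(\partial_t-\Delta)w}{w}=\frac{c\,|\nabla f|^2}{1-f}\,v$, i.e.\ exactly the term $\frac12\frac{|\nabla f|^2}{1-f}v_{i\bar j}$;
\item Lemma~\ref{lemma:2.1}, which gives $(\Delta-\partial_t)U_{i\bar j}=-R_{i\bar j l\bar k}u_{k\bar l}+\frac12(R_{i\bar m}u_{m\bar j}+R_{m\bar j}u_{i\bar m})$.
\end{itemize}
After division by $w=u(1-f)$, this last item produces the curvature bracket multiplied by $\frac{1}{u(1-f)}$, which is how I read the factor $\frac{1}{u(1-f)}$ in the statement. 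The identity for $v_{ij}$, if needed, is the same computation with Lemma~\ref{lemma:2.1} replaced by its $(2,0)$ analogue.

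\emph{Main obstacle.} The only real difficulty is bookkeeping of normalizations. There are three places where they enter: the factor $\frac12$ in the paper's $\Delta$, the resulting factor in $|\nabla f|^2$, and the exact coefficient in front of the drift $\nabla f\cdot\nabla v_{i\bar j}$. That coefficient comes out as a function of $f$ and $u$, and I would check it carefully against the displayed form $\frac{1}{u(1-f)}\nabla f\cdot\nabla$. Any discrepancy there is harmless for the later maximum-principle argument, since a first-order drift term vanishes at an interior maximum of $\bar\xi^T v\,\xi$. What matters downstream is the sign of the zero-order term $\frac{c|\nabla f|^2}{1-f}v_{i\bar j}\ge0$ and the exact form of the curvature term. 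The curvature term will be controlled by $\operatorname{bisec}\ge-K$ in the proof of Theorem~\ref{Kahler}, so I would make sure those two pieces are exactly right.
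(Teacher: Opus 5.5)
Your plan is correct and is essentially the paper's own proof. The paper likewise applies the quotient rule to $u_{i\bar j}/(u(1-f))$, using $\partial_t(u(1-f))=-fu_t$, $\partial_k(u(1-f))=-fu_k$, $u_k=uf_k$ and Lemma~\ref{lemma:2.1}. The only difference is that it expands $\Delta v_{i\bar j}$ into five terms and regroups them, whereas you first compute $(\partial_t-\Delta)[u(1-f)]=\frac12 u|\nabla f|^2$; that is just tidier bookkeeping of the same cancellation.

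Do finish the drift computation, though. With your $\nabla w=-fu\nabla f$ the factor $u$ cancels, and the operator is exactly $-\partial_t+\Delta-\frac{f}{1-f}\nabla f\cdot\nabla$. This is what the paper's proof derives and what its operator $L$ uses; the $\frac{1}{u(1-f)}$ in the printed drift is a typo, as is the stand-alone $+\frac{1}{u(1-f)}$. Your remark that a mismatch here would be harmless is also wrong, for two reasons. First, $V$ and $W$ must carry the same drift, otherwise the gradient terms do not cancel in the maximum principle applied to $\bar\xi^T(\alpha V+W)\xi$. Second, part (b) uses this precise coefficient to compute $Lf=\frac{-1-f}{2(1-f)}|\nabla f|^2$ and to bound $\frac{f}{1-f}\nabla f\cdot\nabla\psi$ via $\frac{-f}{1-f}<1$.
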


\begin{proof}
    Noting that 
      \[
        \begin{aligned}
            \partial_t(u(1-f))&=-u_{t}f,\\
            \partial_k(u(1-f))&=-u_{k}f,
        \end{aligned}
      \]
    we have
      \[
        \begin{aligned}
            \partial_t v_{i\bar{j}} &=\frac{u_{i\bar{j}t}}{u(1-f)}+\frac{u_{i\bar{j}}fu_{t}}{u^{2}(1-f)^{2}}\\
            \partial_k v_{i\bar{j}}&=\frac{u_{i\bar{j}k}}{u(1-f)}+\frac{u_{i\bar{j}}fu_{k}}{u^{2}(1-f)^{2}}.
        \end{aligned}
        \]
        Note that
        \[
          \begin{aligned}
            \Delta v_{i \bar{j}}= & \frac{1}{2}\left(\nabla_k \nabla_{\bar{k}} v_{i \bar{j}}+\nabla_{\bar{k}} \nabla_k v_{i \bar{j}}\right) \\
             =&\frac{\Delta u_{i \bar{j}}}{u(1-f)}
            +\frac{u_{i \bar{j}} f \Delta u}{u^2(1-f)^2}
            +\frac{f\left\langle\nabla u_{i \bar{j}}, \nabla u\right\rangle}{u^2(1-f)^2} \\
            & +\frac{1}{2} \frac{u_{i \bar{j}}\langle\nabla f, \nabla u\rangle}{u^2(1-f)^2}+\frac{u_{i \bar{j}} f^2\left|\nabla u\right|^2}{u^3(1-f)^3}
          \end{aligned}
      \]
      Using $u_k=u f_k$ and Lemma~\ref{lemma:2.1}, we obtain
      \[
        \begin{aligned} 
           (\Delta-\partial_t) v_{i \bar{j}}
          =&\frac{f\left\langle\nabla u_{i \bar{j}}, \nabla f\right\rangle}{u(1-f)^2}+\frac{1}{2} \frac{u_{i \bar{j}}\left|\nabla f\right|^2}{u(1-f)^2} \\
          &+\frac{u_{i \bar{j}} f^2|\nabla f|^2}{u(1-f)^3}+\frac{1}{u(1-f)}\left(-R_{i \bar{j} l \bar{k}} u_{k \bar{l}}+\frac{1}{2} R_{i \bar{l}} u_{l\bar{j}}+\frac{1}{2} R_{l \bar{j}} u_{i \bar{l}}\right)\\
          =&\frac{f}{1-f}\left\langle\nabla f, \frac{\nabla u_{i \bar{j}}}{u(1-f)}+\frac{u_{i \bar{j}} f \nabla f}{u^2(1-f)^2}\right\rangle \\
          & +\frac{1}{2} \frac{u_{i \bar{j}}}{u(1-f)} \frac{\left|\nabla f \right|^2}{(1-f)}+\frac{1}{u(1-f)}\left(-R_{i \bar{j} l \bar{k}} u_{k \bar{l}}+\frac{1}{2} R_{i \bar{l}} u_{l\bar{j}}+\frac{1}{2} R_{l \bar{j}} u_{i \bar{l}}\right),
        \end{aligned}
      \]
      Using the expression for $\partial_k v_{i \bar{j}}$, we obtain
      \[
        \begin{aligned}
          (\Delta-\partial_t)v_{i \bar{j}}
          =&\frac{f}{1-f}\langle\nabla f, \nabla v_{i \bar{j}}\rangle
          +\frac{1}{2}\frac{|\nabla f|^{2}}{1-f}v_{i \bar{j}}\\
          &+\frac{1}{u(1-f)}\left(-R_{i \bar{j} l \bar{k}} u_{k \bar{l}}+\frac{1}{2} R_{i \bar{l}} u_{l\bar{j}}+\frac{1}{2} R_{l \bar{j}} u_{i \bar{l}}\right).
        \end{aligned}
      \]
\end{proof}

\begin{lemma}
    Set \[w_{i \bar{j}}=\frac{u_i u_{\bar{j}}}{u^2(1-f)^2}, \quad w_{i j}=\frac{u_i u_j}{u^2(1-f)^2},\]
    then
    \[
        \begin{aligned}
         \left(-\partial_t+\Delta-\frac{f}{1-f} \nabla f \cdot \nabla\right) w_{i \bar{j}}=&\frac{|\nabla f|^2}{1-f} w_{i \bar{j}}+\left(v_{i \bar{k}}+f w_{i \bar{k}}\right)\left(v_{k \bar{j}}+f w_{k \bar{j}}\right) \\
        & +\left(v_{i k}+f w_{i k}\right) \overline{\left(v_{kj}+f w_{kj}\right)}+\frac{1}{2} R_{i \bar{k}} w_{k \bar{j}}+\frac{1}{2} R_{k \bar{j}} w_{\bar{k} i}.
        \end{aligned}
    \]
\end{lemma}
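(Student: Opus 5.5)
The plan is a direct computation in a holomorphic normal frame at a point $p$, in the same way as in Lemma~\ref{lemma:2.2}. The key choice is to write
\[
w_{i\bar j}=h_i\,\overline{h_j},\qquad h_i:=\frac{f_i}{1-f}=\frac{u_i}{u(1-f)},
\]
so that $w_{ij}=h_ih_j$. Then the heat operator acting on $w_{i\bar j}$ splits by the product rule as
\[
(\Delta-\partial_t)(h_i\overline{h_j})=\big((\Delta-\partial_t)h_i\big)\overline{h_j}+h_i\,\overline{(\Delta-\partial_t)h_j}+\langle\nabla h_i,\nabla \overline{h_j}\rangle ,
\]
where the last term is the cross term produced by $\Delta=\frac12(\nabla_k\nabla_{\bar k}+\nabla_{\bar k}\nabla_k)$. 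Hence it suffices to compute first derivatives of $h_i$ and the evolution of $h_i$.

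\textbf{First derivatives.} We use $u_k=uf_k$ and $\partial(1-f)=-\partial f$. This gives
\[
\nabla_k h_i=\frac{f_{ik}}{1-f}+\frac{f_if_k}{(1-f)^2},\qquad \nabla_{\bar k}h_i=\frac{f_{i\bar k}}{1-f}+\frac{f_if_{\bar k}}{(1-f)^2}.
\]
Next we substitute $f_{ik}=u_{ik}/u-f_if_k$ and the analogous formula for $f_{i\bar k}$, and use $u_{ik}/(u(1-f))=v_{ik}$ and $f_if_k/(1-f)=(1-f)w_{ik}$. After regrouping, $\nabla_k h_i$ and $\nabla_{\bar k}h_i$ become $v_{ik}+f w_{ik}$ and $v_{i\bar k}+f w_{i\bar k}$. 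I would double-check the coefficient of $w$, since $-(1-f)+1=f$.

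Consequently the cross term $\langle\nabla h_i,\nabla\overline{h_j}\rangle$, summed over both the $k$ and $\bar k$ directions, is exactly
\[
(v_{i\bar k}+fw_{i\bar k})(v_{k\bar j}+fw_{k\bar j})+(v_{ik}+fw_{ik})\overline{(v_{kj}+fw_{kj})}.
\]
These are the two quadratic terms in the statement.

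\textbf{Evolution of $h_i$.} Differentiating $f_t=\Delta f+\frac12|\nabla f|^2$ and commuting covariant derivatives gives $(\partial_t-\Delta)f_i=\frac12\nabla_i|\nabla f|^2-\frac12R_{i\bar k}f_k$. The Ricci term here comes from the commutation of $\nabla_{\bar k}\nabla_k$ on a $(1,0)$-form, and only half of $\Delta$ produces curvature, which is where the factor $\frac12$ originates. Combining this with $(\partial_t-\Delta)\frac{1}{1-f}$, computed from the equation for $f$, and regrouping the gradient terms into $\frac{f}{1-f}\langle\nabla f,\nabla h_i\rangle$ should leave
\[
\big(-\partial_t+\Delta-\tfrac{f}{1-f}\nabla f\cdot\nabla\big)h_i=\tfrac{1}{2}\tfrac{|\nabla f|^2}{1-f}h_i+\tfrac12R_{i\bar k}h_k .
\]
This is the analogue of Lemma~\ref{lemma:2.2} for first derivatives. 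Multiplying by $\overline{h_j}$, adding the conjugate counterpart, and noting that the drift term passes through the product rule linearly gives the term $\frac{|\nabla f|^2}{1-f}w_{i\bar j}$. The two half-Ricci contractions become $\frac12R_{i\bar k}w_{k\bar j}+\frac12R_{k\bar j}w_{i\bar k}$.

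\textbf{Main obstacle.} The hardest part is the bookkeeping in the evolution of $h_i$. One has to verify that all the third-order and gradient-cubed terms, namely $\langle\nabla f,\nabla f_i\rangle$, $|\nabla f|^2f_i$ and $f\,|\nabla f|^2 f_i/(1-f)^2$, collapse exactly into the drift term $\frac{f}{1-f}\nabla f\cdot\nabla h_i$ plus $\frac12\frac{|\nabla f|^2}{1-f}h_i$, with no leftover. I would carry out this check by expressing everything in terms of $h$ and $\nabla h$ using the formulas from the first-derivative step. I would also confirm the sign conventions for $\Delta$ and for $R_{i\bar j}$ against Lemma~\ref{lemma:2.1}.
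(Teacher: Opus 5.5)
Your proof is correct, but it organizes the computation differently from the paper.

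The paper works directly in terms of $u$. It expands $\partial_t w_{i\bar j}$, $\partial_k w_{i\bar j}$ and $\Delta w_{i\bar j}$ using the Bochner-type commutation for $\Delta u_i$ and the derivatives of $u^{-2}(1-f)^{-2}$. It collects everything into one expression $H$ and only at the end regroups $H$ into three pieces: the drift term, the term $\frac{|\nabla f|^2}{1-f}w_{i\bar j}$, and a square $\langle \frac{\nabla u_i}{u(1-f)}+\frac{u_i f\nabla u}{u^2(1-f)^2}, \frac{\nabla u_{\bar j}}{u(1-f)}+\frac{u_{\bar j} f\nabla u}{u^2(1-f)^2}\rangle$. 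That square is exactly your $\langle\nabla h_i,\nabla\overline{h_j}\rangle$. So your factorization $w_{i\bar j}=h_i\overline{h_j}$ builds in from the start the structure the paper only finds after regrouping.

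What your route gains:
\begin{itemize}
\item a clean intermediate identity $\bigl(-\partial_t+\Delta-\frac{f}{1-f}\nabla f\cdot\nabla\bigr)h_i=\frac12\frac{|\nabla f|^2}{1-f}h_i+\frac12R_{i\bar k}h_k$, the first-order analogue of Lemma~\ref{lemma:2.2};
\item a transparent reason why the zeroth-order coefficient is twice that of the $v$-equation, since each factor contributes half;
\item a transparent reason why the Ricci terms appear as the symmetric pair;
\item all the delicate cancellation confined to one scalar check.
\end{itemize}
The paper's route stays uniformly in terms of $u$, parallel to Lemma~\ref{lemma:2.2} and to the Han--Zhang Riemannian computation, at the cost of heavier bookkeeping.

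The check you flagged as the main obstacle does close. Set $\phi=(1-f)^{-1}$. Then $\frac12\nabla_i|\nabla f|^2=\langle\nabla f,\nabla f_i\rangle$, using $f_{\bar k i}=f_{i\bar k}$ and $f_{ki}=f_{ik}$. Also $(\Delta-\partial_t)\phi=(\phi^3-\frac12\phi^2)|\nabla f|^2$. Subtracting the drift $\frac{f}{1-f}\langle\nabla f,\nabla(f_i\phi)\rangle$ gives:
\begin{itemize}
\item the coefficient of $\langle\nabla f,\nabla f_i\rangle$ is $\phi^2-\phi-f\phi^2=0$;
\item the coefficient of $f_i|\nabla f|^2$ is $\phi^3-\frac12\phi^2-f\phi^3=\frac12\phi^2$.
\end{itemize}
Both are exactly what your formula for $h_i$ requires.
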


\begin{proof}
    The proof proceeds similarly to that of Lemma~\ref{lemma:2.2}. First,
      \[
          \begin{aligned}
          \partial_t w_{i\bar{j}} &=
            \frac{u_{it}u_{\bar{j}}+u_{i}u_{\bar{j}t}}{u^{2}(1-f)^{2}}
            +
            \frac{2u_{i}u_{t}u_{\bar{j}}f}{u^{3}(1-f)^{3}}\\
          \partial_k w_{i\bar{j}} &=\frac{u_{ik}u_{\bar{j}}+u_{i}u_{\bar{j}k}}{u^{2}(1-f)^{2}}+\frac{2u_{k}u_{i}u_{\bar{j}}f}{u^{3}(1-f)^3}.
          \end{aligned}
      \]
    Second, by Bochner's formula, we have
    \[
    \begin{aligned}
      \Delta w_{i \bar{j}}  
        =&\frac{(\Delta u)_i u_{\bar{j}}+\left\langle\nabla u_i, \nabla u_{\bar{j}}\right\rangle+u_i\left(\Delta u\right)_{\bar{j}}}{u^2(1-f)^2} \\
      & +\frac{1}{2} R_{i \bar{k}} \frac{u_k u_{\bar{j}}}{u^2(1-f)^2}
        +\frac{1}{2} R_{k \bar{j}} \frac{u_{\bar{k}} u_i}{u^2(1-f)^2}
        +\frac{2 u_i u_{\bar{j}} \Delta u f}{u^3(1-f)^3} \\
      & +\frac{u_i u_{\bar{j}}\left\langle\nabla u,\nabla f\right\rangle}{u^3 (1-f)^3}
        +\frac{2 f\big(\left\langle \nabla u_i, \nabla u\right\rangle u_{\bar{j}}+\left\langle\nabla u_{\bar{j}}, \nabla u\right\rangle u_i\big)}{u^3(1-f)^3} 
      +\frac{3 | \nabla u|^2 u_i u_{\bar{j}} f^2}{u^4(1-f)^4}
    \end{aligned}
    \]
    hence
    \[
      (\Delta-\partial_t) w_{i \bar{j}}=H+\frac{1}{2} R_{i \bar{k}} w_{k \bar{j}}+\frac{1}{2} R_{k \bar{j}} w_{i \bar{k}},
    \]
    where
    \[
      \begin{aligned}
        H =&\frac{
          \left(
            \left\langle\nabla u_i, \nabla u\right\rangle u_{\bar{j}}+\left\langle\nabla u_{\bar{j}}, \nabla u\right\rangle u_i
          \right)f
        }{u^3(1-f)^3}
          +\frac{2|\nabla u|^2 u_i u_{\bar{j}} f^2}{u^4(1-f)^4} +\frac{u_i u_{\bar{j}}\left\langle\nabla u, \nabla f\right\rangle}{u^3(1-f)^3}
          +\frac{\left\langle\nabla u_i, \nabla u_{\bar{j}}\right\rangle}{u^2(1-f)^2} \\
        & +\frac{
          \left(
            \left\langle\nabla u_i, \nabla u\right\rangle u_{\bar{j}}+\left\langle\nabla u_{\bar{j}}, \nabla u\right\rangle u_i
          \right)f
        }{u^3(1-f)^3}
          +\frac{u_i u_{\bar{j}} | \nabla u |^2 f^2}{u^4(1-f)^4} \\
        =& \frac{f}{u(1-f)}
          \left\langle
            \nabla u, \frac{\nabla u_i u_{\bar{j}}+u_i \nabla u_{\bar j}}{u^2(1-f)^2}
          +\frac{2 u_i u_{\bar{j}} f\nabla u }{u^3(1-f)^3}
          \right\rangle\\
        & + \frac{\langle\nabla u,\nabla f\rangle}{u(1-f)}\frac{u_i u_{\bar{j}}}{u^{2}(1-f)^{2}}
          +\left\langle
            \frac{\nabla u_i}{u(1-f)}
            +\frac{u_{i}f\nabla u}{u^{2}(1-f)^{2}}, \frac{\nabla u_{\bar{j}}}{u(1-f)}
            +\frac{u_{\bar{j}}f \nabla u}{u^{2}(1-f)^2}
            \right\rangle.
      \end{aligned}
    \]
  Using $u_k = uf_k$ and the expression for $\partial_k w_{i \bar{j}}$, we obtain 
  \[
  \begin{aligned}
    H= & \frac{f}{u(1-f)}\langle\nabla u, \nabla w_{i \bar{j}}\rangle
      +\frac{|\nabla f|^2}{1-f} w_{i \bar{j}} \\
    & +\left(v_{i \bar{k}}+f w_{i \bar{k}}\right)\left(v_{k \bar{j}} + f w_{k \bar{j}}\right)
      +\left(v_{i k}+f w_{i k}\right)\left(\overline{v_{k j}+f w_{k j}}\right). 
  \end{aligned}
  \]
\end{proof}

\begin{remark}
  We define the trace $w$ of $\left(w_{i \bar{j}}\right)$ by
  \[
  w=\operatorname{tr}\left(w_{i \bar{j}}\right)= \frac{|\nabla u|^2}{2u^2(1-f)^2}=\frac{1}{2}\frac{|\nabla f|^{2}}{(1-f)^{2}} .
  \]

  We also have
  \[
    \begin{aligned}
      (-\partial_t &+ \Delta-\frac{f}{1-f} \nabla f \cdot \nabla) v_{i \bar{j}} = (1-f) w v_{i \bar{j}}+\frac{1}{u(1-f)}(-R_{i \bar{j} l \bar{k}} u_{k \bar{l}}+\frac{1}{2} R_{i \bar{l}} u_{l\bar{j}}+\frac{1}{2} R_{l \bar{j}} u_{i \bar{l}}), \\
      (-\partial_t + \Delta & -\frac{f}{1-f} \nabla f \cdot \nabla ) w_{i \bar{j}} = 2(1-f) w w_{i \bar{j}}\\
&+\left(v_{i \bar{k}}+f w_{i \bar{k}}\right)\left(v_{k \bar{j}} + f w_{k \bar{j}}\right) +\left(v_{i k}+f w_{i k}\right)\left(\overline{v_{k j}+f w_{k j}}\right)+ \frac{1}{2} R_{i \bar{k}} w_{k \bar{j}}+\frac{1}{2} R_{k \bar{j}} w_{i \bar{k}}
    \end{aligned}
  \]
\end{remark}

We now turn to the proof of Theorem~\ref{Kahler}.

\begin{proof}[Proof of Theorem~\ref{Kahler}]
  \textbf{Part (a).} We begin with some preliminary calculations. Let $p$ be a point on the manifold, and let $\left\{z^1, \ldots, z^n\right\}$ be a local holomorphic normal coordinate system. In these coordinates, and with the notation of Lemmas~\ref{lemma:2.1} and~\ref{lemma:2.2}, the $(1,1)$-tensor fields $v_{i \bar{j}}$ and
$w_{i \bar{j}}$ may be viewed as $n \times n$ matrices. Set $V=\left(v_{i \bar{j}}\right), W=\left(w_{i \bar{j}}\right), V'=\left(v_{i j}\right), W'=\left(w_{i j}\right)$, $w=\operatorname{tr}(W)$, and
  \[
  L=-\partial_t+\Delta-\frac{f}{1-f} \nabla f \cdot \nabla.
  \]
  
  Then, by Lemma~\ref{lemma:2.1} and Lemma~\ref{lemma:2.2},
  \begin{align}
  L V & = (1-f) w V+P, \\
  L W & =2(1-f) w W
    + (\overline{V + fW})^T(V + fW) + (\overline{V' + fW'})^T(V' + fW')
    +Q,
  \end{align}
  where $P$ and $Q$ are matrices whose $(i, j)$ components are
  \begin{equation}
      P_{i \bar{j}} = -R_{i \bar{j} l \bar{k}} v_{k \bar{l}}+\frac{1}{2} R_{i \bar{l}} v_{l\bar{j}}+\frac{1}{2} R_{l \bar{j}} v_{i \bar{l}}
  \end{equation}
  and
\begin{equation}
  Q_{i \bar{j}}=\frac{1}{2} R_{i \bar{k}} w_{k \bar{j}}+\frac{1}{2} R_{k \bar{j}} w_{i \bar{k}}.
\end{equation}

For a constant $\alpha \in \mathbb{R}$ to be determined, we have
\[
\begin{aligned}
  L(\alpha V+W)=&\alpha(1-f) w V+2(1-f) w W+
  (\overline{V + fW})^T(V + fW) \\
&+ (\overline{V' + fW'})^T(V' + fW')
+\alpha P+Q.
\end{aligned}
\]

Let $\xi \in T^{(1,0)}_p M$ be a unit tangent vector at $p$. We extend $\xi$ to a smooth vector field in the local coordinate neighborhood by parallel translation along geodesics emanating from $p$, and still denote the resulting vector field by $\xi$. Since $V$ and $W$ are $(1,1)$-tensor fields, the function
\[
\lambda=\bar{\xi}^T(\alpha V+W) \xi \equiv(\alpha V+W)(\xi,\bar {\xi})
\]
is a well-defined smooth function in a neighborhood of $p$. Then
\[
L \lambda=H+\bar{\xi}^T(\alpha P+Q) \xi
\]
where
\begin{equation}\label{def H}
  H=\alpha(1-f) w \bar{\xi}^T V \xi+2(1-f) w \bar{\xi}^T W \xi+|(V+f W) \xi|^2 + |(V'+f W') \xi|^2 .
\end{equation}

By $\alpha \bar{\xi}^T V \xi=\lambda-\bar{\xi}^T W \xi$, we have
\begin{align}\label{H formula}
H & = (1-f) w\left(\lambda-\bar{\xi}^T W \xi\right)+2(1-f) w \bar{\xi}^T W \xi+|(V+f W) \xi|^2 + |(V'+f W') \xi|^2 \nonumber\\
& =(1-f) w \lambda+(1-f) w \bar{\xi}^T W \xi+|(V+f W) \xi|^2 + |(V'+f W') \xi|^2.
\end{align}

To simplify the last term further, we fix the point $p$ and assume that $\xi$ is obtained, by parallel translation along geodesics emanating from $p$, from an eigenvector of $\alpha V+W$ at $p$; that is, at $p$,
\[
(\alpha V+W) \xi=\lambda \xi.
\]
Then
\[
(V+f W) \xi=\frac{\lambda}{\alpha} \xi-\frac{1}{\alpha} W \xi+f W \xi=\frac{\lambda}{\alpha} \xi-\left(\frac{1}{\alpha}-f\right) W \xi,
\]
and hence
\[
|(V+f W) \xi|^2=\frac{\lambda^2}{\alpha^2}-\frac{2 \lambda}{\alpha}\left(\frac{1}{\alpha}-f\right) \bar{\xi}^T W \xi+\left(\frac{1}{\alpha}-f\right)^2|W \xi|^2.
\]
Hence
\[
  \begin{aligned}
  H=&\frac{\lambda^2}{\alpha^2}
    + \lambda\left(w-\frac{2}{\alpha^2} \bar{\xi}^T W \xi\right)
    -f \lambda\left(w-\frac{2}{\alpha} \bar{\xi}^T W \xi\right) \\
    &+(1-f) w \bar{\xi}^T W \xi
    +\left(\frac{1}{\alpha}-f\right)^2|W \xi|^2+|(V'+fW')\xi|^2
  \end{aligned}
\]
The last three terms are independent of $\lambda$ and are nonnegative. Hence
\begin{equation}\label{estimate H}
  H \geq \frac{\lambda^2}{\alpha^2}
    +\lambda\left(w-\frac{2}{\alpha^2} \bar{\xi}^T W \xi\right)
    -f \lambda\left(w-\frac{2}{\alpha} \bar{\xi}^T W \xi\right).
\end{equation}

For the second and third terms, we note that $W$ is a rank-one matrix; hence
\[
\bar{\xi}^T W \xi \leq w.
\]

Since $f<0$, if we choose $\alpha \geq 2$, then the second and third terms are nonnegative whenever $\lambda \geq 0$. Hence
\begin{equation}
  H \geq \frac{\lambda^2}{\alpha^2}.
\end{equation}

In summary, if $\lambda \geq 0$, then at the point $p$
\begin{equation}\label{L step 1}
  L \lambda \geq \frac{\lambda^2}{\alpha^2}+\bar{\xi}^T(\alpha P+Q) \xi.
\end{equation}

Let $\tau$ be a universal constant to be fixed later. With $\alpha\geq 2$, suppose the $(1,1)$-tensor field
\[
\alpha V+W-\frac{\tau}{t} g
\]
attains its largest nonnegative eigenvalue at a space time point $\left(p_1, t_1\right)$, with $t_1>0$. Let $\xi$ be a unit eigenvector at $p_1$ corresponding to the largest nonnegative eigenvalue of $\alpha V+W-\frac{\tau}{t} g$. 
We use parallel translation along geodesics emanating from $p_1$ to extend $\xi$ to a smooth vector field still denoted by $\xi$. Set, in local coordinates,
\begin{equation}
  \mu=\bar{\xi}^T\left(\alpha V+W-\frac{\tau}{t} g\right) \xi
\end{equation}
and
\begin{equation}\label{def lambda}
  \lambda=\bar{\xi}^T(\alpha V+W) \xi .
\end{equation}

Then both $\mu$ and $\lambda$ are smooth functions in a space time neighborhood of $\left(p_1, t_1\right)$. Moreover,
\[
L \mu=L\left(\lambda-\frac{\tau}{t}\right)=L \lambda-\frac{\tau}{t^2}=H-\frac{\tau}{t^2}+\bar{\xi}^T(\alpha P+Q) \xi
\]

Here $H$ is given by (\ref{H formula}). We now evaluate at $\left(p_1, t_1\right)$. Since $\lambda-\tau / t$ attains its nonnegative maximum at $\left(p_1, t_1\right)$, (\ref{L step 1}) gives
\[
0 \geq L\left(\lambda-\frac{\tau}{t}\right) \geq \frac{\lambda^2}{\alpha^2}-\frac{\tau}{t^2}+\bar{\xi}^T(\alpha P+Q) \xi \quad \text { at }\left(p_1, t_1\right)
\]
or
\begin{equation} \label{eq:2.11}
  0\geq\frac{\lambda^{2}}{\alpha^{2}}-\frac{\tau}{t^2}+\bar{\xi}^T(\alpha P+Q)\xi \quad \text { at }\left(p_1, t_1\right).
\end{equation}

We can rewrite $\alpha P+Q$ as
\begin{align}\label{alpha P+Q}
\alpha P+Q&=-R_{i\bar{j}l\bar{k}}(\alpha v_{k\bar{l}}+w_{k\bar{l}})+\frac{1}{2}R_{i\bar{l}}(\alpha v_{l\bar{j}}+w_{l\bar{j}})+\frac{1}{2}R_{l\bar{j}}(\alpha v_{i\bar{l}}+w_{i\bar{l}})+R_{i\bar{j}l\bar{k}}w_{k\bar{l}}
\end{align} 

Since, at $(p_{1},t_{1})$, $\xi$ is an eigenvector of the matrix $\alpha V+W-\frac{\tau}{t}g$, it is also an eigenvector of $\alpha V+W$. Thus
\begin{align}
(\alpha V+W)\xi=\lambda \xi.
\end{align}
Thus
\begin{align}
\bar{\xi}_i R_{i \bar{l}} \left(\alpha V_{l \bar{j}}+W_{l \bar{j}}\right) \xi_j=\bar{\xi}_i R_{i \bar{l}} \lambda \xi_l=\lambda Ric(\xi, \bar{\xi}).
\end{align}
Since $\alpha v_{k\bar{l}}+w_{k\bar{l}}$ is Hermitian, we may choose a local unitary frame $\{e_{1},\cdots, e_{n}\}$ at $(p_{1},t_{1})$ such that each $e_i(p_{1},t_{1})$ is an eigenvector of $\alpha V+W$, with $e_{1}(p_{1},t_{1})=\xi_{p_{1},t_{1}}$ and $\lambda_1 = \lambda$. 
\begin{align}
-\bar{\xi}_{i}R_{i\bar{j}l\bar{k}}(\alpha v_{k\bar{l}}+w_{k\bar{l}})\xi_{j}&=-R(e_{1},\bar{e}_{1},e_{k},\bar{e}_{k})\lambda_{k}.
\end{align}
Hence 
\begin{align}
&-\bar{\xi}_{i}R_{i\bar{j}l\bar{k}}(\alpha v_{k\bar{l}}+w_{k\bar{l}})\xi_{j}
  + \frac12 \bar{\xi}_{i}R_{i\bar{l}}(\alpha v_{l\bar{j}}
  + w_{l\bar{j}})\xi_{j}
  + \frac12\bar{\xi}_{i}R_{l\bar{j}}(\alpha v_{i\bar{l}}+w_{i\bar{l}})\xi_{j}\nonumber\\
&=-R(e_{1},\bar{e}_{1},e_{k},\bar{e}_{k})\lambda_{k}+\lambda_{1}Ric(e_{1},\bar{e}_{1})\nonumber\\
&=\sum_{k=2}^{n}[(\lambda_{1}-\lambda_{k})R(e_{1},\bar{e}_{1},e_{k},\bar{e}_{k})]\nonumber\\
&\geq -K(n\lambda_{1}-\sum_{k=1}^{n}\lambda_{k})\nonumber\\
&\label{main term in P+Q}=-nK\lambda_{1}+K\left(\frac{\alpha \Delta u}{u(1-f)}+\frac{|\nabla u|^{2}}{2u^{2}(1-f)^{2}}\right).
\end{align}
In the above inequality, it suffices to assume that the orthogonal bisectional curvature is bounded from below.
Since the bisectional curvature controls the Ricci curvature, we have $Ric \geq -CK$.
By the Li-Yau differential Harnack inequality \cite{S-Y}, we have 
\begin{align*}
\frac{|\nabla u|^{2}}{u^{2}}-\frac{4u_{t}}{u}\leq \frac{C}{t}+CK,
\end{align*}

Since $u_{t}=\Delta u$, we have 
\begin{align}\label{eq:2.20}
-\frac{\Delta u}{u}\leq \frac{C}{t}+CK.
\end{align}
Since $0<u<A$, we have $0<\frac{1}{1-f}\leq 1$.
Hence 
\begin{align*}
\frac{\Delta u}{u(1-f)}\geq -\frac{C}{t}-CK.
\end{align*}

\begin{align}
R_{i\bar{j}l\bar{k}}w_{k\bar{l}}\bar{\xi}_{i}\xi_{j}&
=R_{i\bar{j}l\bar{k}}\frac{u_{k}u_{\bar{l}}}{u^{2}(1-f)^{2}}\bar{\xi}_{i}\xi_{j}
=\frac{1}{u^{2}(1-f)^{2}}R(\xi,\bar{\xi},\partial u,\bar{\partial} u)\nonumber\\
&\label{gradient u}\geq -K\frac{1}{u^{2}(1-f)^{2}}|\partial u|^{2}
=-\frac{K}{2}\frac{|\nabla u|^{2}}{u^{2}(1-f)^{2}}.
\end{align}

Combining the estimates (\ref{main term in P+Q}) and (\ref{gradient u}), we obtain 
 \begin{align}\label{P+Q term 2A}
\bar{\xi}^{T}(\alpha P+Q)\xi\geq -nK\lambda_{1}+K\frac{\alpha \Delta u}{u(1-f)}\geq -nK\lambda_{1}-K(\frac{C}{t}+CK).
 \end{align}

Thus, at the maximum point $(p_{1},t_{1})$, the inequalities (\ref{eq:2.11}) and (\ref{P+Q term 2A}) imply
\begin{align}
0&\geq L\mu\geq \frac{\lambda_1^{2}}{\alpha^{2}}-\frac{\tau}{t^{2}}
+\bar{\xi}^{T}(\alpha P+Q)\xi\\
&\geq \frac{\lambda_1^{2}}{\alpha^{2}}-\frac{\tau}{t^{2}}-nK\lambda_1-CK^2-\frac{CK}{t}.
\end{align}
Hence, by the Cauchy-Schwarz inequality, at $(p_{1},t_{1})$ we have 
\begin{align*}
\frac{\lambda_1}{\alpha}\leq C\frac{\sqrt{\tau+1}}{t}+CK.
\end{align*}
 Then
\[
\lambda_1-\frac{\tau}{t} \leq(\alpha C\sqrt{\tau+1}-\tau) \frac{1}{t}+\alpha CK \leq \alpha CK \quad \text { at }\left(p_1, t_1\right),
\]
after choosing $\tau$ sufficiently large.

Since $\mu=\lambda_1-\frac{\tau}{t}$ at $\left(p_1, t_1\right)$ is the largest eigenvalue of the $(1,1)$-tensor field $\alpha V+W-\frac{\tau}{t} g$ on $M \times(0, T]$, for any unit tangent vector $\eta \in T_x^{(1,0)} M$, $x \in M$, we have
\[
  \bar{\eta}^T(\alpha V+W) \eta-\frac{\tau}{t} g(\eta, \eta) \leq\left.\left(\lambda_1-\frac{\tau}{t}\right)\right|_{\left(p_1, t_1\right)} \leq \alpha CK \quad \text { in } M \times(0, T) .
\]
Thus
\[
t \bar{\eta}^T V \eta \leq \frac{\tau}{\alpha}+CK t.
\]
This proves part (a) of the theorem.

\textbf{Part (b).} Now we localize the result in part (a). 
By Theorem 4.2 in Chapter 4 of \cite{S-Y}, there exists a cutoff function $\phi\in C_{0}^{\infty}(\mathbb{R})$ supported in $[-2,2]$ such that $1\geq \phi\geq 0$ and $\phi(t)=1$ for $0\leq t\leq 1$. Moreover, $\phi$ satisfies 
\begin{align}
\phi^{'}\leq 0, \phi^{''}\geq -C, \frac{|\phi^{'}|^{2}}{\phi}\leq C
\end{align}
where $C$ is a constant.
Let 
\label{def psi}
$\psi(x,t)=\phi(\frac{2\rho(x)}{R})\phi(\frac{2|t_{0}-t|}{T})$,

where $\rho(x)=d(x_0,x)$ is the distance function. From \cite{S-Y}, we have 
\begin{align}\label{gradient estimate c}
\frac{|\nabla \psi(x,t)|^{2}}{\psi(x,t)}=\frac{|\nabla \phi(\frac{2\rho(x)}{R})|^{2}\phi(\frac{2|t_{0}-t|}{T})}{\phi(\frac{2\rho(x)}{R})}\leq \frac{C}{R^{2}}, 
\end{align}
\begin{align}\label{partial t estimate c}
\frac{|\partial_{t}\psi(x,t)|}{\sqrt{\psi(x,t)}}=\frac{\phi(\frac{2\rho(x)}{R})|\phi^{'}|(\frac{2|t_{0}-t|}{T})\frac{2}{T}}{\sqrt{\psi(x,t)}}\leq \frac{C}{T}.
\end{align}

Since the bisectional curvature is bounded below, we have $Ric\geq -CK$. By the Laplacian comparison theorem, we have 
\begin{align*}
\Delta \rho(x)\leq \frac{C}{\rho(x)}(1+\sqrt{K}\rho)\leq \frac{C}{R}+C\sqrt{K},\end{align*}
for $\rho(x)\geq R$.
Note that
$\phi^{'}(\frac{2\rho(x)}{R})$ vanishes when $\rho(x)\leq R$.
Hence 
\begin{align}\label{laplace comparison c}
\Delta \psi(x,t)\geq -\frac{C}{R^{2}}-\frac{C}{R}\sqrt{K}.
\end{align}

For any smooth function $\eta$, we have
\[
\begin{aligned}
\Delta(\psi \eta)=\Delta \psi \eta+\nabla \psi \cdot \nabla \eta+\psi \Delta \eta.
\end{aligned}
\]
Hence
\[
\begin{aligned}
\psi L \eta= & -\psi \partial_t \eta+\psi \Delta \eta-\psi \frac{f}{1-f} \nabla f \cdot \nabla \eta \\
= & -\partial_t(\psi \eta)+\Delta(\psi \eta)-\frac{f}{1-f} \nabla f \cdot \nabla(\psi \eta)+\eta \partial_t \psi-\eta \Delta \psi \\
& +\frac{f}{1-f} \eta \nabla f \cdot \nabla \psi-\nabla \psi \cdot \nabla \eta.
\end{aligned}
\]
The last term can be written as
\[
\begin{aligned}
\nabla \psi \cdot \nabla \eta & =\frac{\nabla \psi}{\psi} \psi \nabla \eta=\frac{\nabla \psi}{\psi}(\nabla(\psi \eta)-\nabla \psi \eta) \\
& =\frac{\nabla \psi}{\psi} \nabla(\psi \eta)-\frac{|\nabla \psi|^2}{\psi} \eta.
\end{aligned}
\]
Hence
\begin{align}\label{cutoff equation}
\psi L \eta=-\partial_t(\psi \eta)+\Delta(\psi \eta)-\frac{f}{1-f} \nabla f \cdot \nabla(\psi \eta)-\frac{\nabla \psi}{\psi} \nabla(\psi \eta) \\
+\eta \partial_t \psi-\eta \Delta \psi+\eta \frac{ f}{1-f} \nabla f \cdot \nabla \psi+\frac{|\nabla \psi|^2}{\psi} \eta.
\end{align}
Denote
\begin{equation}
  L_1=-\partial_t+\Delta-\frac{f}{1-f} \nabla f \cdot \nabla-\frac{ \nabla \psi}{\psi} \nabla.
\end{equation}
Hence the equation can be written as
\[
\psi L \eta=L_1(\eta \psi)-\eta L_1 \psi,
\]
or
\[
L_1(\eta \psi)=\psi L \eta+\eta L_1 \psi.
\]
With $\lambda$ as introduced in (\ref{def lambda}), we have
\begin{equation}\label{local lambda}
  L_1(\psi \lambda)=\psi L \lambda+\lambda L_1 \psi=\psi\left[H+\bar{\xi}^T(\alpha P+Q) \xi\right]+\lambda L_1 \psi,
\end{equation}
where $H$ is given by (\ref{def H}). We next estimate $L_1 \psi$.
\[
L_1 \psi=-\partial_t \psi+\Delta \psi-\frac{|\nabla \psi|^2}{\psi}-\frac{f}{1-f} \nabla f \cdot \nabla \psi.
\]
For the last term, we write
\[
-\frac{f}{1-f} \nabla f \cdot \nabla \psi=-\frac{f}{1-f} \sqrt{\psi} \nabla f \cdot \frac{\nabla \psi}{\sqrt{\psi}} .
\]
Since $\frac{-f}{1-f}<1$ and $\frac{\nabla \psi}{\sqrt{\psi}}$ is bounded, the estimates (\ref{gradient estimate c}), (\ref{partial t estimate c}), and (\ref{laplace comparison c}) imply
\begin{align}\label{estimate of psi}
L_{1}\psi\geq -\frac{C}{R^{2}}-\frac{C}{R}\sqrt{K}-\frac{C}{T}-\frac{C}{R}\sqrt{\psi} |\nabla f|.
\end{align}

It remains to control
\[
\sqrt{\psi} \nabla f.
\]
Recall that $f$ satisfies
\[
-\partial_t f+\Delta f=-\frac{|\nabla f|^2}{2}.
\]
Then
\[
L f=-\partial_t f+\Delta f-\frac{f}{1-f}|\nabla f|^2=-\frac{|\nabla f|^2}{2}-\frac{f}{1-f}|\nabla f|^2=\frac{-1-f}{2(1-f)}|\nabla f|^2.
\]
Note
\[
\frac{-1-f}{1-f} \geq \frac{1}{2} \text { if } f \leq-3 .
\]
Then
\[
L f \geq \frac{1}{4}|\nabla f|^2 .
\]
Hence, we obtain
\[
L_1 f=L f-\frac{\nabla \psi}{\psi} \nabla f=L f-\frac{\nabla \psi}{\psi\sqrt{\psi}} \sqrt{\psi} \nabla f,
\]
and then
  \begin{align}
    \psi L_1 f & =\psi L f-\frac{\nabla \psi}{\sqrt{\psi}} \sqrt{\psi} \nabla f \geq \frac{1}{4} \psi|\nabla f|^2- \frac{\nabla \psi}{\sqrt{\psi}} \sqrt{\psi} \nabla f \nonumber\\
    & \label{estimate of f}\geq \frac{1}{8} \psi|\nabla f|^2-C \frac{|\nabla \psi|^2}{\psi} .
    \end{align}

Now, for a constant $\beta \in \mathbb{R}^{+}$ to be determined, consider
\[
\psi L_1(\psi \lambda+\beta f)=\psi^2 H+\psi^2 \bar{\xi}^T(\alpha P+Q) \xi+\psi \lambda L_1 \psi+\beta \psi L_1 f .
\]
We consider the eigenvalues of the $(1,1)$-tensor field
\[
\psi(\alpha V+W)+\beta f g .
\]
If $\xi$ is an eigenvector of $\psi(\alpha V+W)+\beta f g$ at some point $(x, t)$ corresponding to the eigenvalue $\mu$, then
\[
[\psi(\alpha V+W)+\beta f g] \xi=\mu \xi,
\]

If $\psi(x, t) \neq 0$, then $\xi$ is also an eigenvector of $\alpha V+W$. Hence
\[
\mu=\psi \lambda+\beta f .
\]
Extend $\xi$ to a vector field around $x$ by parallel translation along geodesics starting from $x$, and still denote this vector field by $\xi$. Define a function $\mu=\mu(x, t)$ around $\left(x, t\right)$ by
\[
\mu=\bar{\xi}^T(\psi(\alpha V+W)+\beta f g) \xi .
\]

Since the domain $\Omega$ is given by
\[
\Omega=Q_{R, T}\left(x_0, t_0\right)=B\left(x_0, R\right) \times\left(t_0-T, t_0\right] .
\]
we have
\[
\left.\mu\right|_{\partial_p \Omega}=\left.\beta f\right|_{\partial_p \Omega}<0,
\]
where $\partial_p \Omega=\partial B\left(x_0, R\right)\times\left(t_0-T, t_0\right]\cup B\left(x_0, R\right)\times \{t_0-T\}$.
We will estimate $\mu$ from above. Recall from (\ref{local lambda}) and $\mu=\psi\lambda+\beta f$ that
\begin{equation}\label{estimate of mu 2}
  \psi L_1 \mu=\psi^2 H+\psi^2 \bar{\xi}^T(\alpha P+Q) \xi+\psi \lambda L_1 \psi+\beta \psi L_1 f .
\end{equation}
From (\ref{estimate H}), we have 
\[
\psi^2 H \geq \frac{(\psi \lambda)^2}{\alpha^2}+\psi^2 \lambda\left(w-\frac{2}{\alpha^2} \bar{\xi}^T W \xi\right)-f \psi^2 \lambda\left(w-\frac{2}{\alpha} \bar{\xi}^T W \xi\right)
\]
Let $\mu_1$ be the largest eigenvalue of $\psi(\alpha V+W)+\beta f g$, with unit eigenvector $\xi$, and suppose that $\mu_1$ is attained at the space time point $\left(p_1, t_1\right)$. Assume that $\mu_1>0$ at $(p_{1},t_{1})$; otherwise, the desired local estimate follows immediately. We then have
\[
\psi \lambda_1+\beta f \geq 0,
\]
and hence $\psi \lambda_1 \geq 0$. Then at the point $\left(p_1, t_1\right)$, we have
\[
\psi^2 H \geq \frac{(\psi \lambda_1)^2}{\alpha^2} .
\]
Combining this with (\ref{estimate of f}), (\ref{estimate of mu 2}), and (\ref{estimate of psi}), we obtain
\[
\begin{gathered}
\psi L_1 \mu \geq \frac{(\psi \lambda_1)^2}{\alpha^2}+\psi^2 \bar{\xi}^T(\alpha P+Q) \xi+\beta\left[\frac{1}{8} \psi|\nabla f|^2-C \frac{|\nabla \psi|^2}{\psi}\right] \\
+\psi \lambda_1(-\frac{C}{R^{2}}-\frac{C}{R}\sqrt{K}-\frac{C}{T}-\frac{C}{R}\sqrt{\psi} |\nabla f|).
\end{gathered}
\]
By the Cauchy-Schwarz inequality, we get
\[
\begin{gathered}
\psi L_1 \mu \geq
\frac{1}{2\alpha^2}(\psi \lambda)^2+\psi^2 \bar{\xi}^T(\alpha P+Q) \xi+\beta\left(\frac{1}{8} \psi|\nabla f|^2-C \frac{|\nabla \psi|^2}{\psi}\right) \\
+\psi \lambda_1(-\frac{C}{R^{2}}-\frac{C}{R}\sqrt{K}-\frac{C}{T})
-\frac{C}{R^{2}}\psi |\nabla f|^{2}
\end{gathered}
\]
We now take
\begin{equation}\label{eq:2.30}
  \beta= \frac{8C}{R^{2}}.
\end{equation}
Hence we have 
\[
\begin{aligned}
\psi L_1 \mu_1 &\geq \frac{1}{2\alpha^2}(\psi \lambda_1)^2+\psi^2 \bar{\xi}^T(\alpha P+Q) \xi \\
&+\psi \lambda_1(-\frac{C}{R^{2}}-\frac{C}{R}\sqrt{K}-\frac{C}{T})
-\frac{C}{R^{4}}
\end{aligned}
\]

At $(p_{1},t_{1})$, $\mu_1$ is the largest eigenvalue of $\psi(\alpha V+W)+\beta f g$ with unit eigenvector $\xi$. Since $f \leq 0$ and $\psi=0$ on the parabolic boundary of $\Omega$, the point $p_1$ lies in the interior of $B(p, R)$.

Since $\left(p_1, t_1\right)$ is a maximum point of $\mu$, we have
\[
\begin{aligned}
0 &\geq \psi L_1 \mu \geq \frac{1}{2\alpha^2}(\psi \lambda_1)^2+\psi^2 \bar{\xi}^T(\alpha P+Q) \xi \\
&+\psi \lambda_1(-\frac{C}{R^{2}}-\frac{C}{R}\sqrt{K}-\frac{C}{T})
-\frac{C}{R^{4}}
\end{aligned}
\]
We now estimate $\psi^2 \bar{\xi}^T(\alpha P+Q) \xi$.

Since $Ric \geq-CK$, \cite{Li-Yau} and our choice of the cutoff function $\psi$ imply that, for any $a>2$,

\begin{equation}\label{Li-Yau local}
\begin{aligned}
-a \psi^2 \frac{\Delta u}{u}\leq \psi^2\left(\frac{|\nabla u|^2}{u^2}-a \frac{u_t}{u}\right) \leq C\left(\frac{1}{T}+\frac{1}{R^2}+K\right) .
\end{aligned}
\end{equation}
We note that the term $1 / T$ appears as $1 / t$ in \cite{Li-Yau}. Since our cutoff function is supported in a smaller cube, these two terms are comparable. This estimate will also be used in the local Hessian estimate on Riemannian manifolds. 
From (\ref{P+Q term 2A}), we obtain 
\[
\psi^2 \bar{\xi}^T (\alpha P +Q)\xi \geq \psi^{2}(-nK\lambda_{1}+K\frac{\alpha \Delta u}{u(1-f)})\geq  -CK \psi^2 \lambda_1 - CK\left(\frac1T + \frac{1}{R^2}\right) -CK^2,
\]

Therefore,
\begin{equation}
\begin{aligned}
   \frac{(\psi \lambda_1)^2}{2\alpha^2} \leq& CK \psi^2 \lambda_1 + CK\left(\frac1T + \frac{1}{R^2}\right) +CK^2 \\
   &+ C\left(\frac{1}{T} + \frac{1}{R^2}\right)^2 +  C \left(\frac{1}{T}+\frac{1}{R^2}\right) \psi \lambda_1,
\end{aligned}
\end{equation}
and hence
\[
\psi \lambda_1 \leq C\left(\frac{1}{T}+\frac{1}{R^2}+K\right).
\]
Since $f<0$, 
\[
\mu_1=\left.\mu\right|_{\left(p_1, t_1\right)}=\left.(\psi \lambda+\beta f)\right|_{\left(p_1, t_1\right)} \leq C\left(\frac{1}{T}+\frac{1}{R^2}+K\right) .
\]
 Therefore
\[
\mu \leq C\left(\frac{1}{T}+\frac{1}{R^2}+K\right) \quad \text { in } Q_{R, T}.
\]
Hence, for any unit tangent vector $\xi$ at $x$ with $(x, t) \in Q_{R, T}$, it holds
\[
\psi \bar{\xi}^T(\alpha V+W) \xi+\beta f \leq C\left(\frac{1}{T}+\frac{1}{R^2}+K\right) \quad \text { in } Q_{R, T},
\]
or
\[
\psi \bar{\xi}^T(\alpha V+W) \xi \leq C\left(\frac{1}{T}+\frac{1}{R^2}+K\right)+\beta|f|, \quad \text { in } Q_{R, T} .
\]
Recalling that $\beta=\frac{8C}{R^2}$, we obtain
\[
\psi \bar{\xi}^T V \xi \leq C\left(\frac{1}{T}+\frac{1}{R^2}+K\right)(1-f),
\]
and hence
\[
\psi \frac{u_{i \bar{j}} \bar{\xi}_i \xi_j}{u} \leq C\left(\frac{1}{T}+\frac{1}{R^2}+K\right)(1-f)^2 .
\]
This gives the desired estimate.
\end{proof}

\section{Riemannian Manifolds}

\begin{proof}
We follow the notation of \cite{Han-Zhang}.
Set $f=\log \frac{u}{A}$, $v_{ij}=\frac{u_{ij}}{u(1-f)}$, $w_{ij}=\frac{u_{i}u_{j}}{u^{2}(1-f)^{2}}$ and 
$$
L=-\partial_{t}+\Delta -\frac{2f}{1-f}\nabla f\cdot \nabla.
$$
Set $V=(v_{ij}), W=(w_{ij}),w=tr(W)$.
From \cite{Han-Zhang}, we have 
\begin{equation*}
LV=(1-f)wV+P,
\end{equation*}
\begin{equation*}
LW=2(1-f)wW+2(V+fW)^{2}+Q,
\end{equation*}
where $P,Q$ are matrices whose $(i,j)$-th components are 
\begin{align}\label{Riemann P}
P_{ij}&=-2R_{kijl}v_{kl}+R_{il}v_{jl}+R_{jl}v_{il}
+(\nabla_{i}R_{jl}+\nabla_{j}R_{il}-\nabla_{l}R_{ij})\frac{u_{l}}{u(1-f)},
\end{align}
and 
\begin{equation}\label{Riemann Q}
Q_{ij}=R_{ik}w_{kj}+R_{jk}w_{ki}.
\end{equation}
Suppose that the 2-tensor 
$$
\alpha V+W-\frac{\tau}{t}g
$$
attains its largest nonnegative eigenvalue at a space time point $(p_{1},t_{1})$, with $t_{1}>0$. Here $\alpha$ and $\tau$ are constants to be determined later. Let $\xi$ be a unit eigenvector at $(p_{1},t_{1})$ corresponding to the largest eigenvalue of the matrix $\alpha V+W-\frac{\tau}{t}g$. We extend $\xi$ to a smooth vector field by parallel translation along geodesics emanating from $p_{1}$. Let
\begin{equation*}
\mu=\xi^{T}(\alpha V+W-\frac{\tau}{t}g)\xi,
\end{equation*}
and 
\begin{equation*}
\lambda=\xi^{T}(\alpha V+W)\xi
\end{equation*}
In a space time neighborhood of $(p_{1},t_{1})$, $\mu$ and $\lambda$ are smooth, and at $(p_{1},t_{1})$ they are the largest eigenvalues of the matrices $\alpha V+W-\frac{\tau}{t_1}g$ and $\alpha V+W$, respectively. From \cite{Han-Zhang}, we have 
\begin{equation}
L\mu =H-\frac{\tau}{t^{2}}+\xi^{T}(\alpha P+Q)\xi,
\end{equation}
where $H=\alpha (1-f)w\xi^{T}V\xi+2(1-f)w\xi^{T}W\xi+2|(V+fW)\xi|^{2}$.
Since $\mu$ attains its nonnegative maximum at $(p_{1},t_{1})$, \cite{Han-Zhang} gives 
\begin{equation}\label{Maximum principle}
0\geq L\mu\geq \frac{2\lambda^{2}}{\alpha^{2}}-\frac{\tau}{t^{2}}+\xi^{T}(\alpha P+Q)\xi , \quad \text{at}\quad (p_1,t_1).
\end{equation}
We note that the estimate of $H$ in \cite{Han-Zhang} does not require any curvature assumptions.
We next estimate $\xi^{T}(\alpha P+Q)\xi$ at $(p_{1},t_{1})$. From (\ref{Riemann P}) and (\ref{Riemann Q}), we have
\begin{align}\label{alpha P+Q Riemann}
(\alpha P+Q)_{ij}&=-2 R_{kijl}(\alpha v_{kl}+w_{kl})+2R_{kijl}w_{kl}+R_{il}(\alpha v_{jl}+w_{jl})+R_{jl}(\alpha v_{il}+w_{il})\nonumber\\
&+(\nabla_{i}R_{jl}+\nabla_{j}R_{il}-\nabla_{l}R_{ij})\frac{u_{l}}{u(1-f)}.
\end{align}
Since, at $(p_{1},t_{1})$, $\xi$ is an eigenvector of the matrix $\alpha V+W-\frac{\tau}{t}g$, it is also an eigenvector of $\alpha V+W$ corresponding to the largest eigenvalue at $(p_1,t_1)$. Thus
\begin{align}
(\alpha V+W)\xi=\lambda_1 \xi
\end{align}
Thus
\begin{align}
\xi_{i}R_{il}(\alpha v_{jl}+w_{jl})\xi_{j}=\xi_{i}R_{il}\lambda_{1} \xi_{l}=\lambda_{1} Ric(\xi,\xi).
\end{align}
Since $\alpha v_{kl}+w_{kl}$ is symmetric, its eigenvalues $\lambda_{i}, 1\leq i\leq n$, at $(p_{1},t_{1})$ are real. Set $\lambda_{1}\geq \lambda_{2}\geq \cdots\geq \lambda_{n}$. The eigenvectors $(\alpha V+W)e_{i}=\lambda_{i}e_{i}$ form an orthonormal frame at $(p_{1},t_{1})$, with $e_{1}=\xi$. We extend $\{e_{1},\cdots,e_{n}\}$ to a smooth orthonormal frame in a neighborhood of $p_{1}$ by parallel translation along geodesics emanating from $p_{1}$. In this orthonormal frame, at $(p_{1},t_{1})$, we have 
\begin{align}
-2\xi_{i}R_{kijl}(\alpha v_{kl}+w_{kl})\xi_{j}&=-2\sum_{k=2}^{n}R(e_{k},e_{1},e_{1},e_{k})\lambda_{k},
\end{align}
and
\begin{align}\label{v term}
&-2\xi_{i}R_{kijl}(\alpha v_{kl}+w_{kl})\xi_{j}+\xi_{i}R_{il}(\alpha v_{jl}+w_{jl})\xi_{j}+\xi_{i}R_{jl}(\alpha v_{il}+w_{il})\xi_{j}\nonumber\\
&=-2\sum_{k=2}^{n}R(e_{k},e_{1},e_{1},e_{k})\lambda_{k}+2\lambda_{1}Ric(e_{1},e_{1})\nonumber\\
&=2\sum_{k=2}^{n}[(\lambda_{1}-\lambda_{k})R(e_{k},e_{1},e_{1},e_{k})]\nonumber\\
&\geq -2K_{1}(n\lambda_{1}-\sum_{k=1}^{n}\lambda_{k})\nonumber\\
&=-2nK_{1}\lambda_{1}+2K_{1}(\frac{\alpha \Delta u}{u(1-f)}+\frac{|\nabla u|^{2}}{u^{2}(1-f)^{2}}).
\end{align}
In the above formula, the inequality follows from $\lambda_{1}-\lambda_{k}\geq 0$ and $sec\geq -K_{1}$.

Since the sectional curvature is bounded from below, $sec\geq -K_{1}$, the Li-Yau differential Harnack inequality (see Chapter 4 of \cite{S-Y}) gives 
\begin{align}\label{local laplace estimate}
\frac{|\nabla u|^{2}}{u^{2}}-2\frac{u_{t}}{u}\leq \frac{c(n)}{t}+c(n)K_{1}.
\end{align}
Since $u_{t}=\Delta u$, we have 
\begin{align*}
-\frac{\Delta u}{u}\leq \frac{c(n)}{t}+c(n)K_{1}.
\end{align*}
Since $0<u<A$,  $0<\frac{1}{1-f}\leq 1$.
Hence 
\begin{align*}
\frac{\Delta u}{u(1-f)}\geq -\frac{c(n)}{t}-c(n)K_{1}.
\end{align*}
For the second term in (\ref{alpha P+Q Riemann}), we obtain
\begin{align}\label{w term}
2R_{kijl}w_{kl}\xi_{i}\xi_{j}&=2R_{kijl}\frac{u_{k}u_{l}}{u^{2}(1-f)^{2}}\xi_{i}\xi_{j}=2\frac{1}{u^{2}(1-f)^{2}}R(\nabla u,\xi,\xi,\nabla u)\nonumber\\
&\geq 2\frac{1}{u^{2}(1-f)^{2}}(-K_{1})|\nabla u-<\nabla u,\xi>\xi|^{2}\geq 2\frac{|\nabla u|^{2}}{u^{2}(1-f)^{2}}(-K_{1}).
\end{align}

Substituting the estimates (\ref{v term}) and (\ref{w term}) into (\ref{alpha P+Q Riemann}), we obtain 
\begin{align}
\label{P+Q term 21}(\alpha P_{ij}+Q_{ij})\xi_{i}\xi_{j}&\geq -2nK_{1}\lambda_{1}+2K_{1}\frac{\alpha \Delta u}{u(1-f)}+\xi_{i}\xi_{j}(\nabla_{i}R_{jl}+\nabla_{j}R_{il}-\nabla_{l}R_{ij})\frac{u_{l}}{u(1-f)}\\
&\label{P+Q term 2}\geq -2nK_{1}\lambda_{1}-2K_{1}(\frac{c(n)}{t}+c(n)K_{1})-c(n)K_{2}\frac{|\nabla u|}{u(1-f)}.
\end{align}

Since the sectional curvature is bounded below, by Hamilton's estimate (Theorem 1.1 in \cite{Hamilton}), we have 
\begin{align}\label{Hamilton term}
\frac{|\nabla u|^{2}}{u^{2}}\leq (\frac{1}{t}+2(n-1)K_{1})\log \frac{A}{u}.
\end{align}

Combining (\ref{P+Q term 2}) and (\ref{Hamilton term}), we obtain 
\begin{align}\label{P+Q term 3}
\xi^{T}(\alpha P+Q)\xi&\geq  -2nK_{1}\lambda_{1}-2K_{1}(\frac{c(n)}{t}+(n-1)K_{1})-c(n)K_{2}\sqrt{\frac{1}{t}+c(n)K_{1}}
\end{align}

Thus, at the maximum point $(p_{1},t_{1})$, inequalities (\ref{Maximum principle}) and (\ref{P+Q term 3}) give 
\begin{align*}
0&\geq L\mu\geq \frac{2\lambda_{1}^{2}}{\alpha^{2}}-\frac{\tau}{t^{2}}+\xi^{T}(\alpha P+Q)\xi\\
&\geq  \frac{2\lambda_{1}^{2}}{\alpha^{2}}-\frac{\tau}{t^{2}}-2nK_{1}\lambda_{1}-2K_{1}(\frac{c(n)}{t}+c(n)K_{1})-c(n)K_{2}\sqrt{\frac{1}{t}+c(n)K_{1}}
\end{align*}
Hence, by the Cauchy-Schwarz inequality, at $(p_{1},t_{1})$ we have 
\begin{align*}
\frac{\lambda}{\alpha}\leq \frac{\sqrt{\tau+1}}{t}+B,
\end{align*}
where $B$ is a nonnegative constant depending on $K_{1}$ and $K_{2}$.
The remaining argument is the same as in \cite{Han-Zhang}; hence we obtain 
$$
t\eta^{T}V\eta\leq \frac{\tau}{\alpha}+Bt,
$$ 
where $\eta$ is a unit vector. 
\end{proof}

For the local version, we again follow the notation of \cite{Han-Zhang}. 
\begin{proof}
Let $\psi$ be a cutoff function from (\ref{def psi})  and 
$$
L_{1}=-\partial_{t}+\Delta -\frac{2f}{1-f}\nabla f\cdot \nabla -\frac{2\nabla \psi}{\psi}\nabla.
$$
Let  
$$
\mu_1=\max_{(x,t)\in Q_{R,T}}\max_{\xi\in T_x M,|\xi|=1}\xi^T (\psi(\alpha V+W)+fg)\xi.
$$ 
Assume that $\mu_1>0$; otherwise, the local estimate follows directly. Since $f<0$ and $\psi=0$ on the parabolic boundary of $Q_{R,T}$, $\mu_1$ is attained at an interior point $(p_1,t_1)$ and on a unit tangent vector $\xi\in T_{p_1}M$. Since $\xi$ is an eigenvector of the matrix $\psi(\alpha V+W)+\beta g$ at $(p_1,t_1)$ corresponding to the largest nonnegative eigenvalue $\mu_1$, $\xi$ is also an eigenvector of the matrix $\alpha V+W$ corresponding to the largest eigenvalue $\lambda_1$. We extend $\xi$ to a smooth vector field, still denoted by $\xi$, by parallel translation along geodesics emanating from $p_1$.

From \cite{Han-Zhang}, we have 
\begin{align}\label{local equation 1}
\psi L_{1}\mu=\psi^{2}H+\psi^{2}\xi^{T}(\alpha P+Q)\xi+\psi\lambda L_{1}\psi+\beta \psi L_{1}f,
\end{align}
where $\beta\in \mathbb{R}^{+}$ is a positive constant to be determined, $\mu=\xi^T(\psi(\alpha V+W)+fg)\xi$, and
$\lambda=\xi^{T}(\alpha V+W)\xi$.

By the estimate of $H$ in \cite{Han-Zhang}, at $(p_1,t_1)$ and for $\alpha>2$, we have
\begin{align}\label{local version 2}
0\geq \psi L_{1}\mu \geq \frac{2}{\alpha^{2}}(\psi \lambda)^{2}+\psi^{2}\xi^{T}(\alpha P+Q)\xi+\psi \lambda L_{1}\psi+\beta \psi L_{1}f.
\end{align}
Again, the estimate of $H$ does not require any curvature assumption.

For the same reason as in the K\"ahler case, we obtain 
\begin{align}\label{estimate of psi Riemann case}
L_{1}\psi\geq -\frac{C}{R^{2}}-\frac{C}{R}\sqrt{K_1}-\frac{C}{T}-\frac{C}{R}\sqrt{\psi} |\nabla f|.
\end{align}
Only the lower bound for the Ricci curvature is needed to obtain this estimate.

Also, from \cite{Han-Zhang}, we have 
\begin{align}\label{estimate of f Riemann case}
\psi L_{1}f\geq \frac{1}{4}\psi |\nabla f|^{2}-C\frac{|\nabla \psi|^{2}}{\psi}.
\end{align}
Combining (\ref{estimate of psi Riemann case}) and (\ref{estimate of f Riemann case}), and choosing $\beta=\frac{C}{R^{2}}$, we have, at $(p_1,t_1)$, 
\begin{align}\label{LU 2}
0\geq \psi L_{1}\mu \geq \frac{1}{\alpha^{2}}(\psi \lambda)^{2}+\psi^{2}\xi^{T}(\alpha P+Q)\xi-C\psi \lambda(\frac{1}{R^2}+\frac{1}{R}\sqrt{K_1}+\frac{1}{T})-\frac{C}{R^4}
\end{align}

By Theorem 1.1 in \cite{Zhang}, we have 
\begin{align}\label{local estimate W}
\psi^2|W|\leq C(\frac{1}{T}+\frac{1}{R^2}+K_1).
\end{align}
This estimate also requires only a lower bound for the Ricci curvature.

Using the local estimate (\ref{Li-Yau local}), (\ref{local estimate W}), and (\ref{P+Q term 21}), we obtain 
\begin{align}\label{local alpha P+Q}
\psi^2\xi^{T}(\alpha P+Q)\xi&\geq \psi^2(-2nK_{1}\lambda_{1}+2K_{1}\frac{\alpha \Delta u}{u(1-f)}+\xi_{i}\xi_{j}(\nabla_{i}R_{jl}+\nabla_{j}R_{il}-\nabla_{l}R_{ij})\frac{u_{l}}{u(1-f)})\nonumber\\
&\geq -2nK_1\psi^2\lambda_1-CK_1(\frac{1}{T}+\frac{1}{R^2}+K_1)-CK_2(\frac{1}{T}+\frac{1}{R^2}+K_1)
\end{align}

Substituting (\ref{local alpha P+Q}) into (\ref{LU 2}), we have, at $(p_1,t_1)$,
\begin{align*}
0&\geq \psi L_{1}\mu\geq \frac{1}{2\alpha^{2}}(\psi \lambda_1)^{2} -2nK_1\psi^2\lambda_1-CK_1(\frac{1}{T}+\frac{1}{R^2}+K_1)-CK_2(\frac{1}{T}+\frac{1}{R^2}+K_1)\\
&-C(\frac{1}{T}+\frac{1}{R^2}+\frac{\sqrt{K_1}}{R})^2
\end{align*}
Hence, at $(p_1,t_1)$, we have
\begin{align*}
\psi \lambda_1\leq C(\frac{1}{T}+\frac{1}{R^2}+K_1+K_2)
\end{align*}
Since $f<0$, for any unit tangent vector $\xi$ at $x$ with $(x,t)\in Q_{R,T}$, we have 
\begin{align*}
\psi\xi^T(\alpha V+W)\xi+\beta f\leq C(\frac{1}{T}+\frac{1}{R^2}+K_1+K_2).
\end{align*}
Since $\beta=\frac{C}{R^2}$, we have, for $(x,t)\in Q_{R,T}$,
\begin{align*}
\psi \xi^{T}V\xi\leq C(\frac{1}{T}+\frac{1}{R^2}+K_1+K_2)(1-f).
\end{align*}
Since $\psi(x,t)=1$ for $(x,t)\in Q_{\frac{R}{2},\frac{T}{2}}$, we have, for any unit vector $\xi\in T_x M, (x,t)\in  Q_{\frac{R}{2},\frac{T}{2}}$,
\begin{align*}
\xi^{T}V\xi\leq C(\frac{1}{T}+\frac{1}{R^2}+K_1+K_2)(1-f).
\end{align*}
\end{proof}

\section{Acknowledgements}
This work is supported by National Natural Science Foundation of China(No.11971358).

\bibliographystyle{plain}
\bibliography{ref}

\end{document}